% Nov 9, 2010
% Functor of continuation in Hilbert cube and Hilbert space

\documentclass[12pt,a4paper,draft]{amsart}
%*

\usepackage{latexsym}
\usepackage{ifthen}
\usepackage[leqno]{amsmath}
\usepackage{enumerate}
\usepackage{calc}
\usepackage{amstext,amsbsy,amsopn,amsthm,amsgen,amsfonts,amscd,amsxtra,upref}
\usepackage{mathrsfs}\usepackage{euscript}\usepackage{amssymb}

\swapnumbers
\theoremstyle{plain}
\newtheorem{Thm}{Theorem}[section]
\newtheorem{Lem}[Thm]{Lemma}
\newtheorem{Cor}[Thm]{Corollary}
\newtheorem{Pro}[Thm]{Proposition}
\newtheorem{Prp}[Thm]{Properties}
\newtheorem{Sub}[Thm]{Sublemma}

\theoremstyle{definition}
\newtheorem{Def}[Thm]{Definition}
\newtheorem{Exm}[Thm]{Example}
\newtheorem{Exs}[Thm]{Examples}

\theoremstyle{remark}
\newtheorem{Rem}[Thm]{Remark}
\newtheorem{Rms}[Thm]{Remarks}
\newtheorem*{Com}{Commentary}

%+
%* Apr 17, 2011

\newcommand{\myEmail}{piotr.niemiec@uj.edu.pl}
\newcommand{\myAddress}{\noindent{}Piotr Niemiec\\{}Jagiellonian University\\{}Institute of Mathematics\\{}%%
   ul. \L{}ojasiewicza 6\\{}30-348 Krak\'{o}w\\{}Poland}
\newcommand{\myData}{\author[P. Niemiec]{Piotr Niemiec}\address{\myAddress}\email{\myEmail}}

%%%%%%%%%% SPECIAL SYMBOLS AND MATHEMATICAL FONTS %%%%%%%%%%

\newcommand{\QQQ}{\mathbb{Q}}\newcommand{\RRR}{\mathbb{R}}

%% the same as '\mathbb k'
\newcommand{\BBb}{\CMcal{B}}\newcommand{\CCc}{\CMcal{C}}
\newcommand{\FFf}{\CMcal{F}}\newcommand{\GGg}{\CMcal{G}}
\newcommand{\IIi}{\CMcal{I}}\newcommand{\KKk}{\CMcal{K}}\newcommand{\LLl}{\CMcal{L}}
\newcommand{\MMm}{\CMcal{M}}
\newcommand{\RRr}{\CMcal{R}}
\newcommand{\WWw}{\CMcal{W}}
\newcommand{\ZZz}{\CMcal{Z}}

\newcommand{\ZzZ}{\EuScript{Z}}

\newcommand{\mM}{\mathfrak{m}}

%%%%%%%%%% MY WAY OF NUMBERING %%%%%%%%%%
\newcommand{\SECT}[1]{\section{#1}\renewcommand{\theequation}{\thesection-\arabic{equation}}\setcounter{equation}{0}}

\newcounter{help}
\newcommand{\ITE}[3]{\ifthenelse{#1}{#2}{#3}}\newcommand{\ITEE}[3]{\ITE{\equal{#1}{#2}}{#3}{}}

%%%%%%%%%% MATHEMATICAL FUNCTIONS %%%%%%%%%%
\newcommand{\diam}{\operatorname{diam}}

\newcommand{\dom}{\operatorname{dom}}\newcommand{\im}{\operatorname{im}}\newcommand{\id}{\operatorname{id}}

\newcommand{\card}{\operatorname{card}}\newcommand{\Contr}{\operatorname{Contr}}

\newcommand{\Metr}{\operatorname{Metr}}
\newcommand{\Auth}{\operatorname{Auth}}\newcommand{\Homeo}{\operatorname{Homeo}}

\newcommand{\leqsl}{\leqslant}\newcommand{\geqsl}{\geqslant}

\newcommand{\epsi}{\varepsilon}\newcommand{\varempty}{\varnothing}\newcommand{\dd}{\colon}
\newcommand{\dint}[1]{\,\textup{d} #1}

%%%%%%%%%% OFTEN USED PHRASES %%%%%%%%%%

\newcommand{\iaoi}{if and only if}

\hyphenation{U-ry-sohn Pa-ler-mo War-sza-wa Hil-bert-schen Hil-bert se-mi-groups spa-ces non-empty}

%%%%%%%%%% REFERENCES TO \label AND BIBLIOGRAPHY %%%%%%%%%%
\newcommand{\THM}[1]{Theorem~\textup{\ref{thm:#1}}}

%%%%%%%%%% THEOREMS %%%%%%%%%%
\newenvironment{thm}[1]{\begin{Thm}\label{thm:#1}}{\end{Thm}}
\newenvironment{cor}[1]{\begin{Cor}\label{cor:#1}}{\end{Cor}}

%%%%%%%%%% THEOREMS WITH SPECIAL NAMES %%%%%%%%%%

%* Jul 3, 2011

%%%%%%%%%% REFERENCES TO \label AND BIBLIOGRAPHY %%%%%%%%%%

\newcommand{\bibITEM}[2]{\ITE{\equal{#2}{}}{\bibitem{#1} }{\bibitem[#2]{#1} }}
\newcommand{\BIB}[8]{%% article: hidden label, author, title, journal, number, year, pages, shown label
   \bibITEM{#1}{#8} #2, \textit{#3}, #4{} \textbf{#5} (#6), #7.}
\newcommand{\myBIB}[6][P. Niemiec]{#1, \textit{#2}, #3{}\ITE{\equal{#4}{}}{}{ \textbf{#4}} (#5), #6.}
\newcommand{\BIb}[6]{%% book: hidden label, author, title, publisher + city, year, shown label
   \bibITEM{#1}{#6} #2, \textit{#3}, #4, #5.}
\newcommand{\BiB}[9]{%% article in book: hid.label, author, title, in:, book title, publisher etc, year, pages, shown label
   \bibITEM{#1}{#9} #2, \textit{#3}, #4{} \textit{#5}, #6, #7, #8.}

\newcommand{\myBAPP}[3][P. Niemiec]{%% my paper to appear: [authors], title, comment
   #1, \textit{#2}, #3}

%%%%%%%%%% ABBREVIATIONS FOR JOURNAL NAMES %%%%%%%%%%
%% Proc. Imp. Acad. Tokyo = ? ; Springer

\newcommand{\jRN}[2][]{%%
   \ITEE{#2}{ActaM}{\ITE{\equal{#1}{+}}%%
      {Acta Mathematica}{Acta Math.}}%%
   \ITEE{#2}{ActaMSinES}{\ITE{\equal{#1}{+}}%%
      {Acta Mathematica Sinica (English Series)}{Acta Math. Sin. (Engl. Ser.)}}%%
   \ITEE{#2}{AdvM}{\ITE{\equal{#1}{+}}%%
      {Advances in Mathematics}{Adv. in Math.}}%%
   \ITEE{#2}{ACS}{\ITE{\equal{#1}{+}}%%
      {Applied Categorical Structures}{Appl. Categor. Struct.}}%%
   \ITEE{#2}{ActaSM}{\ITE{\equal{#1}{+}}%%
      {Acta Scientiarum Mathematicarum}{Acta Sci. Math.}}%%
   \ITEE{#2}{AmJM}{\ITE{\equal{#1}{+}}%%
      {American Journal of Mathematics}{Amer. J. Math.}}%%
   \ITEE{#2}{AmMMon}{\ITE{\equal{#1}{+}}%%
      {American Mathematical Monthly}{Amer. Math. Mon.}}%%
   \ITEE{#2}{AnnSciEcNormSupT}{\ITE{\equal{#1}{+}}%%
      {Annales Scientifiques de l'\'{E}cole Normale Sup\'{e}rieure (3)}{Ann. Sci. \'{E}c. Norm. Sup\'{e}r. (3)}}%%
   \ITEE{#2}{AnnM}{\ITE{\equal{#1}{+}}%%
      {Annals of Mathematics}{Ann. Math.}}%%
   \ITEE{#2}{AnnProb}{\ITE{\equal{#1}{+}}%%
      {The Annals of Probability}{Ann. Probab.}}%%
   \ITEE{#2}{AnnPALog}{\ITE{\equal{#1}{+}}%%
      {Annals of Pure and Applied Logic}{Ann. Pure Appl. Logic}}%%
   \ITEE{#2}{APM}{\ITE{\equal{#1}{+}}%%
      {Annales Polonici Mathematici}{Ann. Polon. Math.}}%%
   \ITEE{#2}{ArchM}{\ITE{\equal{#1}{+}}%%
      {Archiv der Mathematik}{Arch. Math.}}%%
   \ITEE{#2}{AttiAccLincRendNat}{\ITE{\equal{#1}{+}}%%
      {Atti della Accademia Nazionale dei Lincei. Rendiconti. Classe di Scienze Fisiche, Matematiche e Naturali}%%
      {Atti Accad. Naz. Lincei Rend. Cl. Sci. Fis. Mat. Nat.}}%%
   \ITEE{#2}{BAMS}{\ITE{\equal{#1}{+}}%%
      {Bulletin of the American Mathematical Society}{Bull. Amer. Math. Soc.}}%%
   \ITEE{#2}{BAustrMS}{\ITE{\equal{#1}{+}}%%
      {Bulletin of the Australian Mathematical Society}{Bull. Austral. Math. Soc.}}%%
   \ITEE{#2}{BLondMS}{\ITE{\equal{#1}{+}}%%
      {Bulletin of the London Mathematical Sociecy}{Bull. Lond. Math. Soc.}}%%
   \ITEE{#2}{BAPolSSSM}{\ITE{\equal{#1}{+}}%%
      {Bulletin de l'Acad\'{e}mie Polonaise des Sciences. S\'{e}rie des Sciences Math\'{e}matiques}%%
      {Bull. Acad. Pol. Sci. S\'{e}r. Sci. Math.}}%%
   \ITEE{#2}{BullSM}{\ITE{\equal{#1}{+}}%%
      {Bulletin des Sciences Math\'{e}matiques}{Bull. Sci. Math.}}%%
   \ITEE{#2}{BullPol}{\ITE{\equal{#1}{+}}%%
      {Bulletin of the Polish Academy of Sciences: Mathematics}{Bull. Pol. Acad. Sci. Math.}}%%
   \ITEE{#2}{CanadJM}{\ITE{\equal{#1}{+}}%%
      {Canadian Journal Mathematics}{Canad. J. Math.}}%%
   \ITEE{#2}{CollectM}{\ITE{\equal{#1}{+}}%%
      {Collectanea Mathematica}{Collect. Math.}}%%
   \ITEE{#2}{CMUC}{\ITE{\equal{#1}{+}}%%
      {Commentationes Mathematicae Universitatis Carolinae}{Comment. Math. Univ. Carolin.}}%%
   \ITEE{#2}{CRParis}{\ITE{\equal{#1}{+}}%%
      {C. R. Paris}{C. R. Paris}}%%
   \ITEE{#2}{CRASParis}{\ITE{\equal{#1}{+}}%%
      {Comptes Rendus de l'Acad\'{e}mie des Sciences. Paris}{C. R. Acad. Sci. Paris}}%%
   \ITEE{#2}{CEurJM}{\ITE{\equal{#1}{+}}%%
      {Central European Journal of Mathematics}{Cent. Eur. J. Math.}}%%
   \ITEE{#2}{CMHelv}{\ITE{\equal{#1}{+}}%%
      {Commentarii Mathematici Helvetici}{Comment. Math. Helv.}}%%
   \ITEE{#2}{CollM}{\ITE{\equal{#1}{+}}%%
      {Colloquium Mathematicum}{Coll. Math.}}%%
   \ITEE{#2}{ComposM}{\ITE{\equal{#1}{+}}%%
      {Compositio Mathematica}{Compos. Math.}}%%
   \ITEE{#2}{CzMJ}{\ITE{\equal{#1}{+}}%%
      {Czechoslovak Mathematical Journal}{Czech. Math. J.}}%%
   \ITEE{#2}{DissM}{\ITE{\equal{#1}{+}}%%
      {Dissertationes Mathematicae}{Dissert. Math.}}%%
   \ITEE{#2}{DANSSSR}{\ITE{\equal{#1}{+}}%%
      {Doklady Akademii Nauk SSSR}{Dokl. Akad. Nauk SSSR}}%%
   \ITEE{#2}{DukeMJ}{\ITE{\equal{#1}{+}}%%
      {Duke Mathematical Journal}{Duke Math. J.}}%%
   \ITEE{#2}{ELA}{\ITE{\equal{#1}{+}}%%
      {The Electronic Journal of Linear Algebra}{Electron. J. Linear Algebra}}%%
   \ITEE{#2}{ExtrM}{\ITE{\equal{#1}{+}}%%
      {Extracta Mathematicae}{Extracta Math.}}%%
   \ITEE{#2}{FM}{\ITE{\equal{#1}{+}}%%
      {Fundamenta Mathematicae}{Fund. Math.}}%%
   \ITEE{#2}{FAA}{\ITE{\equal{#1}{+}}%%
      {Functional Analysis and its Applications}{Funct. Anal. Appl.}}%%
   \ITEE{#2}{FunkAnalPril}{\ITE{\equal{#1}{+}}%%
      {Funktsional'ny\u{\i} Analiz i Ego Prilozheniya}{Funkts. Anal. Prilozh.}}%%
   \ITEE{#2}{GTopA}{\ITE{\equal{#1}{+}}%%
      {General Topology and its Applications}{General Topol. Appl.}}%%
   \ITEE{#2}{HJM}{\ITE{\equal{#1}{+}}%%
      {Houston Journal of Mathematics}{Houston J. Math.}}%%
   \ITEE{#2}{IllinoisJM}{\ITE{\equal{#1}{+}}%%
      {Illinois Journal of Mathematics}{Illinois J. Math.}}%%
   \ITEE{#2}{IndagMP}{\ITE{\equal{#1}{+}}%%
      {Indagationes Mathematicae (Proceedings)}{Indagationes Math. Proc.}}%%
   \ITEE{#2}{IndianaUMJ}{\ITE{\equal{#1}{+}}%%
      {Indiana University Mathematical Journal}{Indiana Univ. Math. J.}}%%
   \ITEE{#2}{InHauEtSPM}{\ITE{\equal{#1}{+}}%%
      {Inst. Hautes \'{E}tudes Sci. Publ. Math.}{Inst. Hautes \'{E}tudes Sci. Publ. Math.}}%%
   \ITEE{#2}{IEOT}{\ITE{\equal{#1}{+}}%%
      {Integral Equations and Operator Theory}{Integr. Equ. Oper. Theory}}%%
   \ITEE{#2}{IsraelJM}{\ITE{\equal{#1}{+}}%%
      {Israel Journal of Mathematics}{Israel J. Math.}}%%
   \ITEE{#2}{JAusMSA}{\ITE{\equal{#1}{+}}%%
      {Journal of the Australian Mathematical Society. Series A}{J. Aust. Math. Soc. Ser. A}}%%
   \ITEE{#2}{JCA}{\ITE{\equal{#1}{+}}%%
      {Journal of Convex Analysis}{J. Convex Anal.}}%%
   \ITEE{#2}{JChinUST}{\ITE{\equal{#1}{+}}%%
      {J. China Univ. Sci. Tech.}{J. China Univ. Sci. Tech.}}%%
   \ITEE{#2}{JFA}{\ITE{\equal{#1}{+}}%%
      {Journal of Functional Analysis}{J. Funct. Anal.}}%%
   \ITEE{#2}{JKoreanMS}{\ITE{\equal{#1}{+}}%%
      {Journal of the Korean Mathematical Society}{J. Korean Math. Soc.}}%%
   \ITEE{#2}{JMAnApp}{\ITE{\equal{#1}{+}}%%
      {J. Math. Anal. Appl.}{J. Math. Anal. Appl.}}%%
   \ITEE{#2}{JOT}{\ITE{\equal{#1}{+}}%%
      {Journal of Operator Theory}{J. Operator Theory}}%%
   \ITEE{#2}{KodaiMSemRep}{\ITE{\equal{#1}{+}}%%
      {Kodai Math. Sem. Rep.}{Kodai Math. Sem. Rep.}}%%
   \ITEE{#2}{LAA}{\ITE{\equal{#1}{+}}%%
      {Linear Algebra and its Applications}{Linear Algebra Appl.}}%%
   \ITEE{#2}{LMLA}{\ITE{\equal{#1}{+}}%%
      {Linear and Multilinear Algebra}{Linear Multilinear Algebra}}%%
   \ITEE{#2}{LNM}{\ITE{\equal{#1}{+}}%%
      {Lecture Notes in Mathematics}{Lecture Notes Math.}}%%
   \ITEE{#2}{MathJap}{\ITE{\equal{#1}{+}}%%
      {Math. Japon.}{Math. Japon.}}%%
   \ITEE{#2}{MLQ}{\ITE{\equal{#1}{+}}%%
      {Mathematical Logic Quarterly}{Math. Log. Q.}}%%
   \ITEE{#2}{MProcCambPhS}{\ITE{\equal{#1}{+}}%%
      {Mathematical Proceedings of the Cambridge Philosophical Society}{Math. Proc. Cambridge Phil. Soc.}}%%
   \ITEE{#2}{MMag}{\ITE{\equal{#1}{+}}%%
      {Mathematics Magazine}{Math. Mag.}}%%
   \ITEE{#2}{MSb}{\ITE{\equal{#1}{+}}%%
      {Matematicheski\u{\i} Sbornik}{Mat. Sb.}}%%
   \ITEE{#2}{MStud}{\ITE{\equal{#1}{+}}%%
      {Matematychni Studi\"{\i}}{Mat. Stud.}}%%
   \ITEE{#2}{MScand}{\ITE{\equal{#1}{+}}%%
      {Mathematica Scandinavica}{Math. Scand.}}%%
   \ITEE{#2}{MAnn}{\ITE{\equal{#1}{+}}%%
      {Mathematische Annalen}{Math. Ann.}}%%
   \ITEE{#2}{MAMS}{\ITE{\equal{#1}{+}}%%
      {Memoirs of the American Mathematical Society}{Mem. Amer. Math. Soc.}}%%
   \ITEE{#2}{MichMJ}{\ITE{\equal{#1}{+}}%%
      {Michigan Mathematical Journal}{Mich. Math. J.}}%%
   \ITEE{#2}{MonatM}{\ITE{\equal{#1}{+}}%%
      {Monatshefte f\"{u}r Mathematik}{Mh. Math.}}%%
   \ITEE{#2}{NonlinA}{\ITE{\equal{#1}{+}}%%
      {Nonlinear Analysis: Theory, Methods \& Applications}{Nonlinear Anal.}}%%
   \ITEE{#2}{NAMS}{\ITE{\equal{#1}{+}}%%
      {Notices of the American Mathematical Society}{Notices Amer. Math. Soc.}}%%
   \ITEE{#2}{OpusM}{\ITE{\equal{#1}{+}}%%
      {Opuscula Mathematica}{Opuscula Math.}}%%
   \ITEE{#2}{PacJM}{\ITE{\equal{#1}{+}}%%
      {Pacific Journal of Mathematics}{Pacific J. Math.}}%%
   \ITEE{#2}{PeriodMHung}{\ITE{\equal{#1}{+}}%%
      {Periodica Mathematica Hungarica}{Period. Math. Hungarica}}%%
   \ITEE{#2}{PAMS}{\ITE{\equal{#1}{+}}%%
      {Proceedings of the American Mathematical Society}{Proc. Amer. Math. Soc.}}%%
   \ITEE{#2}{ProcCambPhS}{\ITE{\equal{#1}{+}}%%
      {Proceedings of the Cambridge Philosophical Society}{Proc. Cambridge Phil. Soc.}}%%
   \ITEE{#2}{ProcImpAcadTokyo}{\ITE{\equal{#1}{+}}%%
      {Proc. Imp. Acad. Tokyo}{Proc. Imp. Acad. Tokyo}}%%
   \ITEE{#2}{ProcKonink}{\ITE{\equal{#1}{+}}%%
      {Proceedings of the Koninklijke Nederlandse Akademie van Wetenschappen}{Nederl. Akad. Wetensch. Proc. Ser. A}}%%
   \ITEE{#2}{PLondMS}{\ITE{\equal{#1}{+}}%%
      {Proceedings of the London Mathematical Society}{Proc. London Math. Soc.}}%%
   \ITEE{#2}{PNatlUSA}{\ITE{\equal{#1}{+}}%%
      {Proceedings of the National Academy of Sciences of the United States of America}{Proc. Natl. Acad. Sci. USA}}%%
   \ITEE{#2}{PublRIMSKyoto}{\ITE{\equal{#1}{+}}%%
      {Publ. Res. Inst. Math. Sci. Kyoto Univ.}{Publ. Res. Inst. Math. Sci.}}%%
   \ITEE{#2}{PWN}{\ITE{\equal{#1}{+}}%%
      {PWN -- Polish Scientific Publishers, Warszawa}{PWN -- Polish Scientific Publishers, Warszawa}}%%
   \ITEE{#2}{RCMP}{\ITE{\equal{#1}{+}}%%
      {Rendiconti del Circolo Matematico di Palermo}{Rend. Circ. Mat. Palermo}}%%
   \ITEE{#2}{RussMS}{\ITE{\equal{#1}{+}}%%
      {Russian Mathematical Surveys}{Russian Math. Surveys}}%%
   \ITEE{#2}{SbM}{\ITE{\equal{#1}{+}}%%
      {Sbornik: Mathematics}{Sb. Math.}}%%
   \ITEE{#2}{SciRepTokyoA}{\ITE{\equal{#1}{+}}%%
      {Science Reports of Tokyo Kyoiku Daigaku, Section A}{Sci. Rep. Tokyo Kyoiku Daigaku Sect. A}}%%
   \ITEE{#2}{SeminProbStras}{\ITE{\equal{#1}{+}}%%
      {S\'{e}minaire de probabilit\'{e}s de Strasbourg}{S\'{e}min. Prob. Strasbourg}}%%
   \ITEE{#2}{SIAMJMAA}{\ITE{\equal{#1}{+}}%%
      {SIAM Journal on Matrix Analysis and Applications}{SIAM J. Matrix Anal. Appl.}}%%
   \ITEE{#2}{SibirMZ}{\ITE{\equal{#1}{+}}%%
      {Sibirski\v{\i} Mat. \v{Z}hurnal}{Sibirsk. Mat. \v{Z}.}}%%
   \ITEE{#2}{SM}{\ITE{\equal{#1}{+}}%%
      {Studia Mathematica}{Studia Math.}}%%
   \ITEE{#2}{TAMS}{\ITE{\equal{#1}{+}}%%
      {Transactions of the American Mathematical Society}{Trans. Amer. Math. Soc.}}%%
   \ITEE{#2}{TohokuMJ}{\ITE{\equal{#1}{+}}%%
      {T\^{o}hoku Mathematical Journal}{T\^{o}hoku Math. J.}}%%
   \ITEE{#2}{TomskUnivRev}{\ITE{\equal{#1}{+}}%%
      {Tomsk Universitet Review}{Tomsk. Univ. Rev.}}%%
   \ITEE{#2}{TopA}{\ITE{\equal{#1}{+}}%%
      {Topology and its Applications}{Topology Appl.}}%%
   \ITEE{#2}{TopMethNA}{\ITE{\equal{#1}{+}}%%
      {Topological Methods in Nonlinear Analysis}{Topol. Methods Nonlinear Anal.}}%%
   \ITEE{#2}{TsukubaJM}{\ITE{\equal{#1}{+}}%%
      {Tsukuba Journal of Mathematics}{Tsukuba J. Math.}}%%
   \ITEE{#2}{UspekhiMN}{\ITE{\equal{#1}{+}}%%
      {Uspekhi Matem. Nauk}{Uspekhi Mat. Nauk}}%%
   }

\newcommand{\paplist}[3][]{%%
   \ITEE{#3}{NIAkhiezer,IMGlazman1993}{%%
      \BIb{#2}{N.I. Akhiezer and I.M. Glazman}%%
         {Theory of Linear Operators in Hilbert Space}%%
         {Dover Publications, Inc., New York}{1993}{#1}}%%
   \ITEE{#3}{RDAnderson1967}{%%
      \BIB{#2}{R.D. Anderson}%%
         {On topological infinite deficiency}%%
         {\jRN{MichMJ}}{14}{1967}{365--383}{#1}}%%
   \ITEE{#3}{RDAnderson,JMcCharen1970}{%%
      \BIB{#2}{R.D. Anderson and J. McCharen}%%
         {On extending homeomorphisms to Fr\'{e}chet manifolds}%%
         {\jRN{PAMS}}{25}{1970}{283--289}{#1}}%%
   \ITEE{#3}{RDAnderson,DWCurtis,JVanMill1982}{%%
      \BIB{#2}{R.D. Anderson, D.W. Curtis, J. van Mill}%%
         {A fake topological Hilbert space}%%
         {\jRN{TAMS}}{272}{1982}{311--321}{#1}}%%
   \ITEE{#3}{RArens,JEells1956}{%%
      \BIB{#2}{R. Arens and J. Eells}%%
         {On embedding uniform and topological spaces}%%
         {\jRN{PacJM}}{6}{1956}{397--403}{#1}}%%
   \ITEE{#3}{NAronszajn,PPanitchpakdi1956}{%%
      \BIB{#2}{N. Aronszajn and P. Panitchpakdi}%%
         {Extension of uniformly continuous transformations and hyperconvex metric spaces}%%
         {\jRN{PacJM}}{6}{1956}{405--439}{#1}}%%
   \ITEE{#3}{KJBabenko1948}{%%
      \BIB{#2}{K.J. Babenko}%%
         {On conjugate functions}%%
         {\jRN{DANSSSR}}{62}{1948}{157--160}{#1}}%%
   \ITEE{#3}{TBanakh1995}{%%
      \BIB{#2}{T.O. Banakh}%%
         {Topology of spaces of probability measures, I}%%
         {\jRN{MStud}}{5}{1995}{65--87 (Russian)}{#1}}%%
   \ITEE{#3}{TBanakh1995a}{%%
      \BIB{#2}{T.O. Banakh}%%
         {Topology of spaces of probability measures, II}%%
         {\jRN{MStud}}{5}{1995}{88--106 (Russian)}{#1}}%%
   \ITEE{#3}{TBanakh1998}{%%
      \BIB{#2}{T. Banakh}%%
         {Characterization of spaces admitting a homotopy dense embedding into a Hilbert manifold}%%
         {\jRN{TopA}}{86}{1998}{123--131}{#1}}%%
   \ITEE{#3}{TBanakh,TNRadul1997}{%%
      \BIB{#2}{T.O. Banakh and T.N. Radul}%%
         {Topology of spaces of probability measures}%%
         {\jRN{SbM}}{188}{1997}{973--995}{#1}}%%
   \ITEE{#3}{TBanakh,TRadul,MZarichnyi1996}{%%
      \BIb{#2}{T. Banakh, T. Radul, M. Zarichnyi}%%
         {Absorbing sets in infinite-dimensional manifolds}%%
         {VNTL Publishers, Lviv}{1996}{#1}}%%
   \ITEE{#3}{TBanakh,IZarichnyy2008}{%%
      \BIB{#2}{T. Banakh and I. Zarichnyy}%%
         {Topological groups and convex sets homeomorphic to non-separable Hilbert spaces}%%
         {\jRN{CEurJM}}{6}{2008}{77--86}{#1}}%%
   \ITEE{#3}{HBecker,ASKechris1996}{%%
      \BIb{#2}{H. Becker and A.S. Kechris}%%
         {The Descriptive Set Theory of Polish Group Actions \textup{(London Math. Soc. Lecture Note Series, vol. 232)}}%%
         {University Press, Cambridge}{1996}{#1}}%%
   \ITEE{#3}{GBeer1993}{%%
      \BIb{#2}{G. Beer}%%
         {Topologies on Closed and Closed Convex Sets \textup{(Mathematics and Its Applications)}}%%
         {Kluwer Academic Publishers, Dordrecht}{1993}{#1}}%%
   \ITEE{#3}{NEBenamara,NNikolski1999}{%%
      \BIB{#2}{N.E. Benamara and N. Nikolski}%%
         {Resolvent tests for similarity to a normal operator}%%
         {\jRN{PLondMS}}{78}{1999}{585--626}{#1}}%%
   \ITEE{#3}{YBenyamini,JLindenstrauss2000}{%%
      \BIb{#2}{Y. Benyamini and J. Lindenstrauss}%%
         {Geometric nonlinear functional analysis I}%%
         {AMS Colloquium Publications 48}{2000}{#1}}%%
   \ITEE{#3}{SKBerberian1974}{%%
      \BIb{#2}{S.K. Berberian}%%
         {Lectures in Functional Analysis and Operator Theory}%%
         {Graduate Texts in Mathematics 15, Springer-Verlag, New York}{1974}{#1}}%%
   \ITEE{#3}{SNBernstein1954}{%%
      \BIb{#2}{S.N. Bernstein}%%
         {Collected Works II}%%
         {Akad. Nauk SSSR, Moscow}{1954 (Russian)}{#1}}%%
   \ITEE{#3}{CzBessaga,APelczynski1972}{%%
      \BIB{#2}{Cz. Bessaga and A. Pe\l{}czy\'{n}ski}%%
         {On spaces of measurable functions}%%
         {\jRN{SM}}{44}{1972}{597--615}{#1}}%%
   \ITEE{#3}{CzBessaga,APelczynski1975}{%%
      \BIb{#2}{Cz. Bessaga and A. Pe\l{}czy\'{n}ski}%%
         {Selected topics in infinite-dimensional topology}%%
         {\jRN{PWN}}{1975}{#1}}%%
   \ITEE{#3}{MBestvina,JMogilski1986}{%%
      \BIB{#2}{M. Bestvina and J. Mogilski}%%
         {Characterizing certain incomplete infinite-dimensional absolute retracts}%%
         {\jRN{MichMJ}}{33}{1986}{291--313}{#1}}%%
   \ITEE{#3}{MBestvina,PBowers,JMogilsky,JWalsh1986}{%%
      \BIB{#2}{M. Bestvina, P. Bowers, J. Mogilsky, J. Walsh}%%
         {Characterization of Hilbert space manifolds revisited}%%
         {\jRN{TopA}}{24}{1986}{53--69}{#1}}%%
   \ITEE{#3}{RBhatia1997}{%%
      \BIb{#2}{R. Bhatia}%%
         {Matrix Analysis}%%
         {Springer, New York}{1997}{#1}}%%
   \ITEE{#3}{GBirkhoff1936}{%%
      \BIB{#2}{G. Birkhoff}%%
         {A note on topological groups}%%
         {\jRN{ComposM}}{3}{1936}{427--430}{#1}}%%
   \ITEE{#3}{MSBirman,MZSolomjak1987}{%%
      \BIb{#2}{M.S. Birman and M.Z. Solomjak}%%
         {Spectral Theory of Self-Adjoint Operators in Hilbert Space}%%
         {D. Reidel Publishing Co., Dordrecht}{1987}{#1}}%%
   \ITEE{#3}{EBishop1961}{%%
      \BIB{#2}{E. Bishop}%%
         {A generalization of the Stone-Weierstrass theorem}%%
         {\jRN{PacJM}}{11}{1961}{777--783}{#1}}%%
   \ITEE{#3}{BBlackadar2006}{%%
      \BIb{#2}{B. Blackadar}{Operator Algebras. Theory of $\CCc^*$-algebras and von Neumann algebras %%
         \textup{(Encyclopaedia of Mathematical Sciences, vol. 122: Operator Algebras and Non-Commutative Geometry III)}}%%
         {Springer-Verlag, Berlin-Heidelberg}{2006}{#1}}%%
   \ITEE{#3}{JBlass,WHolsztynski1972}{%%
      \BIB{#2}{J. Blass and W. Holszty\'{n}ski}%%
         {Cubical polyhedra and homotopy III}%%
         {\jRN{AttiAccLincRendNat}}{53}{1972}{275--279}{#1}}%%
   \ITEE{#3}{FFBonsall,NJDuncan1973}{%%
      \BIb{#2}{F.F. Bonsall and N.J. Duncan}%%
         {Complete Normed Algebras}%%
         {Springer Verlag, Berlin}{1973}{#1}}%%
   \ITEE{#3}{NBourbaki2002}{%%
      \BIb{#2}{N. Bourbaki}%%
         {Lie Groups and Lie Algebras, Chapters 4--6}%%
         {Springer, New York}{2002}{#1}}%%
   \ITEE{#3}{PLBowers1989}{%%
      \BIB{#2}{P.L. Bowers}%%
         {Limitation topologies on function spaces}%%
         {\jRN{TAMS}}{314}{1989}{421--431}{#1}}%%
   \ITEE{#3}{ABrown1953}{%%
      \BIB{#2}{A. Brown}%%
         {On a class of operators}%%
         {\jRN{PAMS}}{4}{1953}{723--728}{#1}}%%
   \ITEE{#3}{ABrown,CKFong,DWHadwin1978}{%%
      \BIB{#2}{A. Brown, C.-K. Fong, D.W. Hadwin}%%
         {Parts of operators on Hilbert space}%%
         {\jRN{IllinoisJM}}{22}{1978}{306--314}{#1}}%%
   \ITEE{#3}{AMBruckner,JBBruckner,BSThomson1997}{%%
      \BIb{#2}{A.M. Bruckner, J.B. Bruckner, B.S. Thomson}%%
         {Real Analysis}%%
         {Prentice-Hall, New Jersey}{1997}{#1}}%%
   \ITEE{#3}{PJCameron,AMVershik2006}{%%
      \BIB{#2}{P.J. Cameron and A.M. Vershik}%%
         {Some isometry groups of Urysohn space}%%
         {\jRN{AnnPALog}}{143}{2006}{70--78}{#1}}%%
   \ITEE{#3}{CCastaing1966}{%%
      \BIB{#2}{C. Castaing}%%
         {Quelques probl\`{e}mes de mesurabilit\'{e} li\'{e}es \`{a} la th\'{e}orie de la commande}%%
         {\jRN{CRParis}}{262}{1966}{409--411}{#1}}%%
   \ITEE{#3}{JAVanCasteren1980}{%%
      \BIB{#2}{J.A. van Casteren}%%
         {A problem of Sz.-Nagy}%%
         {\jRN{ActaSM}}{42}{1980}{189--194}{#1}}%%
   \ITEE{#3}{JAVanCasteren1983}{%%
      \BIB{#2}{J.A. van Casteren}%%
         {Operators similar to unitary or selfadjoint ones}%%
         {\jRN{PacJM}}{104}{1983}{241--255}{#1}}%%
   \ITEE{#3}{XCatepillan,MPtak,WSzymanski1994}{%%
      \BIB{#2}{X. Catepill\'{a}n, M. Ptak, W. Szyma\'{n}ski}%%
         {Multiple canonical decompositions of families of operators and a model of quasinormal families}%%
         {\jRN{PAMS}}{121}{1994}{1165--1172}{#1}}%%
   \ITEE{#3}{RCauty1994}{%%
      \BIB{#2}{R. Cauty}%%
         {Un espace m\'{e}trique lin\'{e}aire qui n'est pas un r\'{e}tracte absolu}%%
         {\jRN{FM}}{146}{1994}{85--99, (French)}{#1}}%%
   \ITEE{#3}{TAChapman1971}{%%
      \BIB{#2}{T.A. Chapman}%%
         {Deficiency in infinite-dimensional manifolds}%%
         {\jRN{GTopA}}{1}{1971}{263--272}{#1}}%%
   \ITEE{#3}{TAChapman1976}{%%
      \BIb{#2}{T.A. Chapman}%%
         {Lectures on Hilbert cube manifolds}%%
         {C.B.M.S. Regional Conference Series in Math. No 28, Amer. Math. Soc.}{1976}{#1}}%%
   \ITEE{#3}{RBChuaqui1977}{%%
      \BIB{#2}{R.B. Chuaqui}%%
         {Measures invariant under a group of transformations}%%
         {\jRN{PacJM}}{68}{1977}{313--329}{#1}}%%
   \ITEE{#3}{JBConway1985}{%%
      \BIb{#2}{J.B. Conway}%%
         {A Course in Functional Analysis}%%
         {Springer-Verlag, New York}{1985}{#1}}%%
   \ITEE{#3}{JBConway2000}{%%
      \BIb{#2}{J.B. Conway}%%
         {A Course in Operator Theory}%%
         {(Graduate Studies in Mathematics, vol. 21) Amer. Math. Soc., Providence}{2000}{#1}}%%
   \ITEE{#3}{GCorach,AMaestripieri,MMbekhta2009}{%%
      \BIB{#2}{G. Corach, A. Maestripieri, M. Mbekhta}%%
         {Metric and homogeneous structure of closed range operators}%%
         {\jRN{JOT}}{61}{2009}{171--190}{#1}}%%
   \ITEE{#3}{MJCowen,RGDouglas1978}{%%
      \BIB{#2}{M.J. Cowen and R.G. Douglas}%%
         {Complex geometry and operator theory}%%
         {\jRN{ActaM}}{141}{1978}{187--261}{#1}}%%
   \ITEE{#3}{DWCurtis1985}{%%
      \BIB{#2}{D.W. Curtis}%%
         {Boundary sets in the Hilbert cube}%%
         {\jRN{TopA}}{20}{1985}{201--221}{#1}}%%
   \ITEE{#3}{MMDay1958}{%%
      \BIb{#2}{M.M. Day}%%
         {Normed Linear Spaces}%%
         {Springer Verlag, Berlin}{1958}{#1}}%%
   \ITEE{#3}{CDellacherie1967}{%%
      \BIB{#2}{C. Dellacherie}%%
         {Un compl\'{e}ment au th\'{e}or\`{e}me de Weierstrass-Stone}%%
         {\jRN{SeminProbStras}}{1}{1967}{52--53}{#1}}%%
   \ITEE{#3}{JJDijkstra1987}{%%
      \BIB{#2}{J.J. Dijkstra}%%
         {Strong negligibility of $\sigma$-compacta does not characterize Hilbert space}%%
         {\jRN{PacJM}}{127}{1987}{19--30}{#1}}%%
   \ITEE{#3}{JJDijkstra1990}{%%
      \BIB{#2}{J.J. Dijkstra}%%
         {Characterizing Hilbert space topology in terms of strong negligibility}%%
         {\jRN{ComposM}}{75}{1990}{299--306}{#1}}%%
   \ITEE{#3}{TDobrowolski,WMarciszewski2002}{%%
      \BIB{#2}{T. Dobrowolski and W. Marciszewski}%%
         {Failure of the Factor Theorem for Borel pre-Hilbert spaces}%%
         {\jRN{FM}}{175}{2002}{53--68}{#1}}%%
   \ITEE{#3}{TDobrowolski,JMogilski1990}{%%
      \BiB{#2}{T. Dobrowolski and J. Mogilski}%%
         {Problems on Topological Classification of Incomplete Metric Spaces}{Chapter 25 in:}%%
         {Open Problems in Topology}{J. van Mill and G.M. Reed (eds.), North-Holland Amsterdam}{1990}{411--429}{#1}}%%
   \ITEE{#3}{TDobrowolski,HTorunczyk1981}{%%
      \BIB{#2}{T. Dobrowolski and H. Toru\'{n}czyk}%%
         {Separable complete ANR's admitting a group structure are Hilbert manifolds}%%
         {\jRN{TopA}}{12}{1981}{229--235}{#1}}%%
   \ITEE{#3}{RGDouglas1966}{%%
      \BIB{#2}{R.G. Douglas}%%
         {On majorization, factorization and range inclusion of operators in Hilbert space}%%
         {\jRN{PAMS}}{17}{1966}{413--416}{#1}}%%
   \ITEE{#3}{CHDowker1947}{%%
      \BIB{#2}{C.H. Dowker}%%
         {Mapping theorems for non-compact spaces}%%
         {\jRN{AmJM}}{69}{1947}{200--242}{#1}}%%
   \ITEE{#3}{CHDowker1952}{%%
      \BIB{#2}{C.H. Dowker}%%
         {Topology of metric complexes}%%
         {\jRN{AmJM}}{74}{1952}{555--577}{#1}}%%
   \ITEE{#3}{JDugundji1951}{%%
      \BIB{#2}{J. Dugundji}%%
         {An extension of Tietze's theorem}%%
         {\jRN{PacJM}}{1}{1951}{353--367}{#1}}%%
   \ITEE{#3}{JDugundji1958}{%%
      \BIB{#2}{J. Dugundji}%%
         {Absolute neighborhood retracts and local connectedness for arbitrary metric spaces}%%
         {\jRN{ComposM}}{13}{1958}{229--246}{#1}}%%
   \ITEE{#3}{JDugundji1965}{%%
      \BIB{#2}{J. Dugundji}%%
         {Locally equiconnected spaces and absolute neighborhood retracts}%%
         {\jRN{FM}}{57}{1965}{187--193}{#1}}%%
   \ITEE{#3}{NDunford,JTSchwartz1958}{%%
      \BIb{#2}{N. Dunford and J.T. Schwartz}%%
         {Linear Operators, part I}%%
         {Interscience Publishers, New York}{1958}{#1}}%%
   \ITEE{#3}{NDunford,JTSchwartz1963}{%%
      \BIb{#2}{N. Dunford and J.T. Schwartz}%%
         {Linear Operators, part II}%%
         {Interscience Publishers, New York}{1963}{#1}}%%
   \ITEE{#3}{NDunford,JTSchwartz1971}{%%
      \BIb{#2}{N. Dunford and J.T. Schwartz}%%
         {Linear Operators, part III}%%
         {Wiley-Interscience, New York}{1971}{#1}}%%
   \ITEE{#3}{MLEaton,MDPerlman1977}{%%
      \BIB{#2}{M.L. Eaton and M.D. Perlman}%%
         {Reflection groups, generalized Schur functions and the geometry of majorization}%%
         {\jRN{AnnProb}}{5}{1977}{829--860}{#1}}%%
   \ITEE{#3}{EGEffros1965}{%%
      \BIB{#2}{E.G. Effros}%%
         {The Borel space of von Neumann algebras on a separable Hilbert space}%%
         {\jRN{PacJM}}{15}{1965}{1153--1164}{#1}}%%
   \ITEE{#3}{EGEffros1966}{%%
      \BIB{#2}{E.G. Effros}%%
         {Global structure in von Neumann algebras}%%
         {\jRN{TAMS}}{121}{1966}{434--454}{#1}}%%
   \ITEE{#3}{REspinola,MAKhamsi2001}{%%
      \BiB{#2}{R. Espinola and M.A. Khamsi}%%
         {Introduction to hyperconvex spaces}{Chapter XIII in:}{Handbook of Metric Fixed Point Theory}%%
         {W.A. Kirk and B. Sims (editors), Kluwer Academic Publishers}{2001}{391--435}{#1}}%%
   \ITEE{#3}{PAFillmore,JPWilliams1971}{%%
      \BIB{#2}{P.A. Fillmore and J.P. Williams}%%
         {On operator ranges}%%
         {\jRN{AdvM}}{7}{1971}{254--281}{#1}}%%
   \ITEE{#3}{JEells,NHKuiper1969}{%%
      \BIB{#2}{J. Eells and N.H. Kuiper}%%
         {Homotopy negligible subsets in infinite-dimensional manifolds}%%
         {\jRN{ComposM}}{21}{1969}{151--161}{#1}}%%
   \ITEE{#3}{REngelking1977}{%%
      \BIb{#2}{R. Engelking}%%
         {General Topology}%%
         {\jRN{PWN}}{1977}{#1}}%%
   \ITEE{#3}{REngelking1978}{%%
      \BIb{#2}{R. Engelking}%%
         {Dimension Theory}%%
         {\jRN{PWN}}{1978}{#1}}%%
   \ITEE{#3}{REngelking1989}{%%
      \BIb{#2}{R. Engelking}%%
         {General Topology. Revised and completed edition \textup{(Sigma series in pure mathematics, vol. 6)}}%%
         {Heldermann Verlag, Berlin}{1989}{#1}}%%
   \ITEE{#3}{PErdos,RDMauldin1976}{%%
      \BIB{#2}{P. Erd\"{o}s and R.D. Mauldin}%%
         {The nonexistence of certain invariant measures}%%
         {\jRN{PAMS}}{59}{1976}{321--322}{#1}}%%
   \ITEE{#3}{JErnest1976}{%%
      \BIB{#2}{J. Ernest}%%
         {Charting the operator terrain}%%
         {\jRN{MAMS}}{171}{1976}{207 pp}{#1}}%%
   \ITEE{#3}{RHFox1943}{%%
      \BIB{#2}{R.H. Fox}%%
         {On fiber spaces, II}%%
         {\jRN{BAMS}}{49}{1943}{733--735}{#1}}%%
   \ITEE{#3}{NAFriedman1970}{%%
      \BIb{#2}{N.A. Friedman}%%
         {Introduction to ergodic theory}%%
         {Van Nostrand Reinhold Company}{1970}{#1}}%%
   \ITEE{#3}{MFujii,MKajiwara,YKato,FKubo1976}{%%
      \BIB{#2}{M. Fujii, M. Kajiwara, Y. Kato, F. Kubo}%%
         {Decompositions of operators in Hilbert spaces}%%
         {\jRN{MathJap}}{21}{1976}{117--120}{#1}}%%
   \ITEE{#3}{SGao,ASKechris2003}{%%
      \BIB{#2}{S. Gao and A.S. Kechris}%%
         {On the classification of Polish metric spaces up to isometry}%%
         {\jRN{MAMS}}{161}{2003}{viii+78}{#1}}%%
   \ITEE{#3}{MIGarrido,FMontalvo1991}{%%
      \BIB{#2}{M.I. Garrido and F. Montalvo}%%
         {On some generalizations of the Kakutani-Stone and Stone-Weierstrass theorems}%%
         {\jRN{ExtrM}}{6}{1991}{156--159}{#1}}%%
   \ITEE{#3}{LGe,JShen2002}{%%
      \BIB{#2}{L. Ge and J. Shen}%%
         {Generator problem for certain property T factors}%%
         {\jRN{PNAS}}{99}{2002}{565--567}{#1}}%%
   \ITEE{#3}{IMGelfand,MANaimark1943}{%%
      \BIB{#2}{I.M. Gelfand and M.A. Naimark}%%
         {On the embedding of normed rings into the ring of operators in Hilbert space}%%
         {\jRN{MSb}}{12}{1943}{197--213}{#1}}%%
   \ITEE{#3}{FGesztesy,MMalamud,MMitrea,SNaboko2009}{%%
      \BIB{#2}{F. Gesztesy, M. Malamud, M. Mitrea, S. Naboko}%%
         {Generalized polar decompositions for closed operators in Hilbert spaces and some applications}%%
         {\jRN{IEOT}}{64}{2009}{83--113}{#1}}%%
   \ITEE{#3}{LGillman,MJerison1960}{%%
      \BIb{#2}{L. Gillman and M. Jerison}%%
         {Rings of continuous functions}%%
         {New York}{1960}{#1}}%%
   \ITEE{#3}{JGlimm1960}{%%
      \BIB{#2}{J. Glimm}%%
         {A Stone-Weierstrass theorem for $\CCc^*$-algebras}%%
         {\jRN{AnnM}}{72}{1960}{216--244}{#1}}%%
   \ITEE{#3}{GGodefroy,NJKalton2003}{%%
      \BIB{#2}{G. Godefroy and N.J. Kalton}%%
         {Lipschitz-free Banach spaces}%%
         {\jRN{SM}}{159}{2003}{121--141}{#1}}%%
   \ITEE{#3}{ICGohberg,MGKrein1967}{%%
      \BIB{#2}{I.C. Gohberg and M.G. Krein}%%
         {On a description of contraction operators similar to unitary ones}%%
         {\jRN{FunkAnalPril}}{1}{1967}{38--60}{#1}}%%
   \ITEE{#3}{ELGriffinJr1953}{%%
      \BIB{#2}{E.L. Griffin Jr.}%%
         {Some contributions to the theory of rings of operators}%%
         {\jRN{TAMS}}{75}{1953}{471--504}{#1}}%%
   \ITEE{#3}{ELGriffinJr1955}{%%
      \BIB{#2}{E.L. Griffin Jr.}%%
         {Some contributions to the theory of rings of operators II}%%
         {\jRN{TAMS}}{79}{1955}{389--400}{#1}}%%
   \ITEE{#3}{MGromov1981}{%%
      \BIB{#2}{M. Gromov}%%
         {Groups of polynomial growth and expanding maps}%%
         {\jRN{InHauEtSPM}}{53}{1981}{53--73}{#1}}%%
   \ITEE{#3}{MGromov1999}{%%
      \BIb{#2}{M. Gromov}%%
         {Metric Structures for Riemannian and Non-Riemannian Spaces}%%
         {Progress in Math. \textbf{152}, Birkh\"{a}user}{1999}{#1}}%%
   \ITEE{#3}{JDeGroot1956}{%%
      \BIB{#2}{J. de Groot}%%
         {Non-archimedean metrics in topology}%%
         {\jRN{PAMS}}{7}{1956}{948--953}{#1}}%%
   \ITEE{#3}{LCGrove,CTBenson1985}{%%
      \BIb{#2}{L.C. Grove and C.T. Benson}%%
         {Finite Reflection Group}%%
         {2nd ed., Springer-Verlag}{1985}{#1}}%%
   \ITEE{#3}{VIGurarii1966}{%%
      \BIB{#2}{V.I. Gurari\v{\i}}{Spaces of universal placement, isotropic spaces and a problem of Mazur %%
         on rotations of Banach spaces \textup{(Russian)}}%%
         {\jRN{SibirMZ}}{7}{1966}{1002--1013}{#1}}%%
   \ITEE{#3}{DWHadwin1976}{%%
      \BIB{#2}{D.W. Hadwin}%%
         {An operator-valued spectrum}%%
         {\jRN{NAMS}}{23}{1976}{A-163}{#1}}%%
   \ITEE{#3}{DWHadwin1977}{%%
      \BIB{#2}{D.W. Hadwin}%%
         {An operator-valued spectrum}%%
         {\jRN{IndianaUMJ}}{26}{1977}{329--340}{#1}}%%
   \ITEE{#3}{HHahn1932}{%%
      \BIb{#2}{H. Hahn}%%
         {Reelle Funktionen I}%%
         {Leipzig}{1932}{#1}}%%
   \ITEE{#3}{PRHalmos1950}{%%
      \BIb{#2}{P.R. Halmos}%%
         {Measure theory}%%
         {Van Nostrand, New York}{1950}{#1}}%%
   \ITEE{#3}{PRHalmos1951}{%%
      \BIb{#2}{P.R. Halmos}%%
         {Introduction to Hilbert Space and the Theory of Spectral Multiplicity}%%
         {Chelsea Publishing Company, New York}{1951}{#1}}%%
   \ITEE{#3}{PRHalmos1956}{%%
      \BIb{#2}{P.R. Halmos}%%
         {Lectures on Ergodic Theory}%%
         {Publ. Math. Soc. Japan, Tokyo}{1956}{#1}}%%
   \ITEE{#3}{PRHalmos1982}{%%
      \BIb{#2}{P.R. Halmos}%%
         {A Hilbert Space Problem Book}%%
         {Springer-Verlag New York Inc.}{1982}{#1}}%%
  \ITEE{#3}{PRHalmos,JEMcLaughlin1963}{%%
      \BIB{#2}{P.R. Halmos and J.E. McLaughlin}%%
         {Partial isometries}%%
         {\jRN{PacJM}}{13}{1963}{585--596}{#1}}%%
   \ITEE{#3}{RWHansell1972}{%%
      \BIB{#2}{R.W. Hansell}%%
         {On the nonseparable theory of Borel and Souslin sets}%%
         {\jRN{BAMS}}{78}{1972}{236--241}{#1}}%%
   \ITEE{#3}{FHausdorff1930}{%%
      \BIB{#2}{F. Hausdorff}%%
         {Erweiterung einer Hom\"{o}omorphie}%%
         {\jRN{FM}}{16}{1930}{353--360}{#1}}%%
   \ITEE{#3}{FHausdorff1934}{%%
      \BIB{#2}{F. Hausdorff}%%
         {\"{U}ber innere Abbildungen}%%
         {\jRN{FM}}{23}{1934}{279--291}{#1}}%%
   \ITEE{#3}{FHausdorff1938}{%%
      \BIB{#2}{F. Hausdorff}%%
         {Erweiterung einer stetigen Abbildung}%%
         {\jRN{FM}}{30}{1938}{40--47}{#1}}%%
   \ITEE{#3}{DWHenderson1971}{%%
      \BIB{#2}{D.W. Henderson}%%
         {Corrections and extensions of two papers about infinite-dimensional manifolds}%%
         {\jRN{GTopA}}{1}{1971}{321--327}{#1}}%%
   \ITEE{#3}{DWHenderson1975}{%%
      \BIB{#2}{D.W. Henderson}%%
         {$Z$-sets in ANR's}%%
         {\jRN{TAMS}}{213}{1975}{205--216}{#1}}%%
   \ITEE{#3}{DWHenderson,RMSchori1970}{%%
      \BIB{#2}{D.W. Henderson and R.M. Schori}%%
         {Topological classification of infinite-dimensional manifolds by homotopy type}%%
         {\jRN{BAMS}}{76}{1970}{121--124}{#1}}%%
   \ITEE{#3}{DWHenderson,JEWest1970}{%%
      \BIB{#2}{D.W. Henderson and J.E. West}%%
         {Triangulated infinite-dimensional manifolds}%%
         {\jRN{BAMS}}{76}{1970}{655--660}{#1}}%%
   \ITEE{#3}{BHoffmann1979}{%%
      \BIB{#2}{B. Hoffmann}%%
         {A compact contractible topological group is trivial}%%
         {\jRN{ArchM}}{32}{1979}{585--587}{#1}}%%
   \ITEE{#3}{DHofmann2002}{%%
      \BIB{#2}{D. Hofmann}%%
         {On a generalization of the Stone-Weierstrass theorem}%%
         {\jRN{ACS}}{10}{2002}{569--592}{#1}}%%
   \ITEE{#3}{GHognas,AMukherjea1995}{%%
      \BIb{#2}{G. H\"ogn\"as and A. Mukherjea}%%
         {Probability Measures on Semigroups. Convolution Products, Random Walks, and Random Matrices}%%
         {Plenum Press, New York}{1995}{#1}}%%
   \ITEE{#3}{MRHolmes1992}{%%
      \BIB{#2}{M.R. Holmes}%%
         {The universal separable metric space of Urysohn and isometric embeddings thereof in Banach spaces}%%
         {\jRN{FM}}{140}{1992}{199--223}{#1}}%%
   \ITEE{#3}{MRHolmes2008}{%%
      \BIB{#2}{M.R. Holmes}%%
         {The Urysohn space embeds in Banach spaces in just one way}%%
         {\jRN{TopA}}{155}{2008}{1479--1482}{#1}}%%
   \ITEE{#3}{RRHolmes,TYTam1999}{%%
      \BIB{#2}{R.R. Holmes and T.Y. Tam}%%
         {Distance to the convex hull of an orbit under the action of a compact group}%%
         {\jRN{JAusMSA}}{66}{1999}{331--357}{#1}}%%
   \ITEE{#3}{RHorn,RMathias1990}{%%
      \BIB{#2}{R. Horn and R. Mathias}%%
         {Cauchy-Schwartz inequalities associated with positive semidefinite matrices}%%
         {\jRN{LAA}}{142}{1990}{63--82}{#1}}%%
   \ITEE{#3}{GEHuhunaisvili1955}{%%
      \BIB{#2}{G.E. Huhunai\v{s}vili}%%
         {On a property of Urysohn's universal metric space}%%
         {\jRN{DANSSSR}}{101}{1955}{607--610 (Russian)}{#1}}%%
   \ITEE{#3}{JEHumphreys1990}{%%
      \BIb{#2}{J.E. Humphreys}%%
         {Reflection Groups and Coxeter Groups}%%
         {Cambridge University Press}{1990}{#1}}%%
   \ITEE{#3}{JRIsbell1964}{%%
      \BIB{#2}{J.R. Isbell}%%
         {Six theorems about injective metric spaces}%%
         {\jRN{CMHelv}}{39}{1964}{65--76}{#1}}%%
   \ITEE{#3}{SIzumino,YKato1985}{%%
      \BIB{#2}{S. Izumino and Y. Kato}%%
         {The closure of invertible operators on Hilbert space}%%
         {\jRN{ActaSM}}{49}{1985}{321--327}{#1}}%%
   \ITEE{#3}{CJiang2004}{%%
      \BIB{#2}{C. Jiang}%%
         {Similarity classification of Cowen-Douglas operators}%%
         {\jRN{CanadJM}}{56}{2004}{742--775}{#1}}%%
   \ITEE{#3}{WBJohnson,JLindenstrauss2001}{%%
      \BiB{#2}{W.B. Johnson and J. Lindenstrauss}{Basic Concepts in the Geometry of Banach Spaces}%%
         {Chapter 1 in:}{Handbook of the Geometry of Banach Spaces, Vol. 1}%%
         {W.B. Johnson and J. Lindenstrauss (editors), Elsevier Science B.V., Amsterdam}{2001}{1--84}{#1}}%%
   \ITEE{#3}{IBJung,JStochel2008}{%%
      \BIB{#2}{I.B. Jung and J. Stochel}%%
         {Subnormal operators whose adjoints have rich point spectrum}%%
         {\jRN{JFA}}{255}{2008}{1797--1816}{#1}}%%
   \ITEE{#3}{RVKadison,JRRingrose1983}{%%
      \BIb{#2}{R.V. Kadison and J.R. Ringrose}%%
         {Fundamentals of the Theory of Operator Algebras. Volume I: Elementary Theory}%%
         {Academic Press, Inc., New York-London}{1983}{#1}}%%
   \ITEE{#3}{RVKadison,JRRingrose1986}{%%
      \BIb{#2}{R.V. Kadison and J.R. Ringrose}%%
         {Fundamentals of the Theory of Operator Algebras. Volume II: Advanced Theory}%%
         {Academic Press, Inc., Orlando-London}{1986}{#1}}%%
   \ITEE{#3}{SKakutani1936}{%%
      \BIB{#2}{S. Kakutani}%%
         {\"{U}ber die Metrisation der topologischen Gruppen}%%
         {\jRN{ProcImpAcadTokyo}}{12}{1936}{82--84}{#1}}%%
   \ITEE{#3}{SKakutani1938}{%%
      \BIB{#2}{S. Kakutani}%%
         {Two fixed-point theorems concerning bicompact convex sets}%%
         {\jRN{ProcImpAcadTokyo}}{14}{1938}{242--245}{#1}}%%
   \ITEE{#3}{SKakutani1941}{%%
      \BIB{#2}{S. Kakutani}%%
         {Concrete representation of abstract L-spaces}%%
         {\jRN{AnnM}}{42}{1941}{523--537}{#1}}%%
   \ITEE{#3}{SKakutani1941a}{%%
      \BIB{#2}{S. Kakutani}%%
         {Concrete representation of abstract M-spaces}%%
         {\jRN{AnnM}}{42}{1941}{994--1024}{#1}}%%
   \ITEE{#3}{NKalton2007}{%%
      \BIB{#2}{N. Kalton}%%
         {Extending Lipschitz maps into $\CCc(K)$-spaces}%%
         {\jRN{IsraelJM}}{162}{2007}{275--315}{#1}}%%
   \ITEE{#3}{RKane2001}{%%
      \BIb{#2}{R. Kane}%%
         {Reflection Groups and Invariant Theory}%%
         {Canadian Mathematical Society, Springer}{2001}{#1}}%%
   \ITEE{#3}{VKannan,SRRaju1980}{%%
      \BIB{#2}{V. Kannan and S.R. Raju}%%
         {The nonexistence of invariant universal measures on semigroups}%%
         {\jRN{PAMS}}{78}{1980}{482--484}{#1}}%%
   \ITEE{#3}{IKaplansky1951}{%%
      \BIB{#2}{I. Kaplansky}%%
         {A theorem on rings of operators}%%
         {\jRN{PacJM}}{1}{1951}{227--232}{#1}}%%
   \ITEE{#3}{MKatetov1988}{%%
      \BiB{#2}{M. Kat\v{e}tov}{On universal metric spaces}{in: Frolik (ed.),}%%
         {General Topology and its Relations to Modern Analysis and Algebra VI. Proceedings of the Sixth Prague %%
         Topological Symposium 1986}{Heldermann Verlag Berlin}{1988}{323--330}{#1}}%%
   \ITEE{#3}{YKatznelson1960}{%%
      \BIB{#2}{Y. Katznelson}%%
         {Sur les alg\'{e}bres dont les \'{e}l\'{e}ments non n\'{e}gatifs admettent des racines carr\'{e}es}%%
         {\jRN{AnnSciEcNormSupT}}{77}{1960}{167--174}{#1}}%%
   \ITEE{#3}{OHKeller1931}{%%
      \BIB{#2}{O.H. Keller}%%
         {Die Homoiomorphie der kompakten konvexen Mengen in Hilbertschen Raum}%%
         {\jRN{MAnn}}{105}{1931}{748--758}{#1}}%%
   \ITEE{#3}{MAKhamsi,WAKirk,CMartinez2000}{%%
      \BIB{#2}{M.A. Khamsi, W.A. Kirk, C. Martinez}%%
         {Fixed point and selection theorems in hyperconvex spaces}%%
         {\jRN{PAMS}}{128}{2000}{3275--3283}{#1}}%%
   \ITEE{#3}{ABKhararazishvili1998}{%%
      \BIb{#2}{A.B. Khararazishvili}%%
         {Transformation groups and invariant measures. Set-theoretic aspects}%%
         {World Scientific Publishing Co., Inc., River Edge, NJ}{1998}{#1}}%%
   \ITEE{#3}{YKijima1987}{%%
      \BIB{#2}{Y. Kijima}%%
         {Fixed points of nonexpansive self-maps of a compact metric space}%%
         {\jRN{JMAnApp}}{123}{1987}{114--116}{#1}}%%
  \ITEE{#3}{JSKim,ChRKim,SGLee1980}{%%
      \BIB{#2}{J.S. Kim, Ch.R. Kim, S.G. Lee}%%
         {Reducing operator valued spectra of a Hilbert space operator}%%
         {\jRN{JKoreanMS}}{17}{1980}{123--129}{#1}}%%
   \ITEE{#3}{JKindler1995}{%%
      \BIB{#2}{J. Kindler}%%
         {Minimax theorems with applications to convex metric spaces}%%
         {\jRN{CollM}}{68}{1995}{179--186}{#1}}%%
   \ITEE{#3}{WAKirk1998}{%%
      \BIB{#2}{W.A. Kirk}%%
         {Hyperconvexity of $\RRR$-trees}%%
         {\jRN{FM}}{156}{1998}{67--72}{#1}}%%
   \ITEE{#3}{VLKleeJr1952}{%%
      \BIB{#2}{V.L. Klee Jr.}%%
         {Invariant metrics in groups (solution of a problem of Banach)}%%
         {\jRN{PAMS}}{3}{1952}{484--487}{#1}}%%
   \ITEE{#3}{HJKowalsky1957}{%%
      \BIB{#2}{H.J. Kowalsky}%%
         {Einbettung metrischer R\"{a}ume}%%
         {\jRN{ArchM}}{8}{1957}{336--339}{#1}}%%
   \ITEE{#3}{WKubis,MRubin2010}{%%
      \BIB{#2}{W. Kubi\'{s} and M. Rubin}%%
         {Extension and reconstruction theorems for the Urysohn universal metric space}%%
         {\jRN{CzMJ}}{60}{2010}{1--29}{#1}}%%
   \ITEE{#3}{KKuratowski1966}{%%
      \BIb{#2}{K. Kuratowski}%%
         {Topology. \textup{Vol. I}}%%
         {\jRN{PWN}}{1966}{#1}}%%
   \ITEE{#3}{KKuratowski,BKnaster1927}{%%
      \BIB{#2}{K. Kuratowski and B. Knaster}%%
         {A connected and connected im kleinen point set which contains no perfect subset}%%
         {\jRN{BAMS}}{33}{1927}{106--109}{#1}}%%
   \ITEE{#3}{KKuratowski,AMostowski1976}{%%
      \BIb{#2}{K. Kuratowski and A. Mostowski}%%
         {Set Theory with an Introduction to Descriptive Set Theory}%%
         {\jRN{PWN}}{1976}{#1}}%%
   \ITEE{#3}{GLewicki1992}{%%
      \BIB{#2}{G. Lewicki}%%
         {Bernstein's ``lethargy'' theorem in metrizable topological linear spaces}%%
         {\jRN{MonatM}}{113}{1992}{213--226}{#1}}%%
   \ITEE{#3}{ASLewis1996}{%%
      \BIB{#2}{A.S. Lewis}%%
         {Group invariance and convex matrix analysis}%%
         {\jRN{SIAMJMAA}}{17}{1996}{927--949}{#1}}%%
   \ITEE{#3}{C-KLi,N-KTsing1991}{%%
      \BIB{#2}{C.-K. Li and N.-K. Tsing}%%
         {$G$-invariant norms and $G(c)$-radii}%%
         {\jRN{LAA}}{150}{1991}{179--194}{#1}}%%
   \ITEE{#3}{AJLazar,JLindenstrauss1971}{%%
      \BIB{#2}{A.J. Lazar and J. Lindenstrauss}%%
         {Banach spaces whose duals are $L_1$ spaces and their representing matrices}%%
         {\jRN{ActaM}}{126}{1971}{165--193}{#1}}%%
   \ITEE{#3}{EHLieb,MLoss1997}{%%
      \BIb{#2}{E.H. Lieb and M. Loss}%%
         {Analysis \textup{(Graduate Studies in Mathematics, vol. 14)}}%%
         {Amer. Math. Soc., Providence, RI}{1997}{#1}}%%
   \ITEE{#3}{ALindenbaum1926}{%%
      \BIB{#2}{A. Lindenbaum}%%
         {Contributions \`{a} l'\'{e}tude de l'espace m\'{e}trique I}%%
         {\jRN{FM}}{8}{1926}{209--222}{#1}}%%
   \ITEE{#3}{DLindenstrauss,LTzafriri1971}{%%
      \BIB{#2}{D. Lindenstrauss and L. Tzafriri}%%
         {On the complemented subspaces problem}%%
         {\jRN{IsraelJM}}{9}{1971}{263--269}{#1}}%%
   \ITEE{#3}{RILoebl1986}{%%
      \BIB{#2}{R.I. Loebl}%%
         {A note on containment of operators}%%
         {\jRN{BAustrMS}}{33}{1986}{279--291}{#1}}%%
   \ITEE{#3}{LHLoomis1945}{%%
      \BIB{#2}{L.H. Loomis}%%
         {Abstract congruence and the uniqueness of Haar measure}%%
         {\jRN{AnnM}}{46}{1945}{348--355}{#1}}%%
   \ITEE{#3}{LHLoomis1949}{%%
      \BIB{#2}{L.H. Loomis}%%
         {Haar measure in uniform structures}%%
         {\jRN{DukeMJ}}{16}{1949}{193--208}{#1}}%%
   \ITEE{#3}{ERLorch1939}{%%
      \BIB{#2}{E.R. Lorch}%%
         {Bicontinuous linear transformation in certain vector spaces}%%
         {\jRN{BAMS}}{45}{1939}{564--569}{#1}}%%
   \ITEE{#3}{ATLundell,SWeingram1969}{%%
      \BIb{#2}{A.T. Lundell and S. Weingram}%%
         {The topology of CW-complexes}%%
         {Litton Educ. Publ.}{1969}{#1}}%%
   \ITEE{#3}{WLusky1976}{%%
      \BIB{#2}{W. Lusky}%%
         {The Gurarij spaces are unique}%%
         {\jRN{ArchM}}{27}{1976}{627--635}{#1}}%%
   \ITEE{#3}{WLusky1977}{%%
      \BIB{#2}{W. Lusky}%%
         {On separable Lindenstrauss spaces}%%
         {\jRN{JFA}}{26}{1977}{103--120}{#1}}%%
   \ITEE{#3}{DMaharam1942}{%%
      \BIB{#2}{D. Maharam}%%
         {On homogeneous measure algebras}%%
         {\jRN{PNatlUSA}}{28}{1942}{108--111}{#1}}%%
   \ITEE{#3}{MMalicki,SSolecki2009}{%%
      \BIB{#2}{M. Malicki and S. Solecki}%%
         {Isometry groups of separable metric spaces}%%
         {\jRN{MProcCambPhS}}{146}{2009}{67--81}{#1}}%%
   \ITEE{#3}{PMankiewicz1972}{%%
      \BIB{#2}{P. Mankiewicz}%%
         {On extension of isometries in normed linear spaces}%%
         {\jRN{BAPolSSSM}}{20}{1972}{367--371}{#1}}%%
   \ITEE{#3}{JMartinezMaurica,MTPellon1987}{%%
      \BIB{#2}{J. Martinez-Maurica and M.T. Pell\'{o}n}%%
         {Non-archimedean Chebyshev centers}%%
         {\jRN{IndagMP}}{90}{1987}{417--421}{#1}}%%
   \ITEE{#3}{KMaurin1980}{%%
      \BIb{#2}{K. Maurin}%%
         {Analysis, Part II}%%
         {D. Reidel, Dordrecht-Boston-London}{1980}{#1}}%%
   \ITEE{#3}{SMazur,SUlam1932}{%%
      \BIB{#2}{S. Mazur and S. Ulam}%%
         {Sur les transformationes isom\'{e}triques d'espaces vectoriels norm\'{e}s}%%
         {\jRN{CRASParis}}{194}{1932}{946--948}{#1}}%%
   \ITEE{#3}{SMazurkiewicz1920}{%%
      \BIB{#2}{S. Mazurkiewicz}%%
         {Sur les lignes de Jordan}%%
         {\jRN{FM}}{1}{1920}{166--209}{#1}}%%
   \ITEE{#3}{SMazurkiewicz,WSierpinski1920}{%%
      \BIB{#2}{S. Mazurkiewicz and W. Sierpi\'{n}ski}%%
         {Contributions a la topologie des ensembles denombrables}%%
         {\jRN{FM}}{1}{1920}{17--27}{#1}}%%
   \ITEE{#3}{MMbekhta1992}{%%
      \BIB{#2}{M. Mbekhta}%%
         {Sur la structure des composantes connexes semi-Fredholm de $B(H)$}%%
         {\jRN{PAMS}}{116}{1992}{521--524}{#1}}%%
   \ITEE{#3}{JEMcCarthy1996}{%%
      \BIB{#2}{J.E. McCarthy}%%
         {Boundary values and Cowen-Douglas curvature}%%
         {\jRN{JFA}}{137}{1996}{1--18}{#1}}%%
   \ITEE{#3}{JMelleray2007}{%%
      \BIB{#2}{J. Melleray}%%
         {Computing the complexity of the relation of isometry between separable Banach spaces}%%
         {\jRN{MLQ}}{53}{2007}{128--131}{#1}}%%
   \ITEE{#3}{JMelleray2007a}{%%
      \BIB{#2}{J. Melleray}%%
         {On the geometry of Urysohn's universal metric space}%%
         {\jRN{TopA}}{154}{2007}{384--403}{#1}}%%
   \ITEE{#3}{JMelleray2008}{%%
      \BIB{#2}{J. Melleray}%%
         {Some geometric and dynamical properties of the Urysohn space}%%
         {\jRN{TopA}}{155}{2008}{1531--1560}{#1}}%%
   \ITEE{#3}{JMelleray,FVPetrov,AMVershik2008}{%%
      \BIB{#2}{J. Melleray, F.V. Petrov, A.M. Vershik}%%
         {Linearly rigid metric spaces and the embedding problem}%%
         {\jRN{FM}}{199}{2008}{177--194}{#1}}%%
   \ITEE{#3}{EMichael1953}{%%
      \BIB{#2}{E. Michael}%%
         {Some extension theorems for continuous functions}%%
         {\jRN{PacJM}}{3}{1953}{789--806}{#1}}%%
   \ITEE{#3}{EMichael1954}{%%
      \BIB{#2}{E. Michael}%%
         {Local properties of topological spaces}%%
         {\jRN{DukeMJ}}{21}{1954}{163--171}{#1}}%%
   \ITEE{#3}{EMichael1956}{%%
      \BIB{#2}{E. Michael}%%
         {Selected selection theorems}%%
         {\jRN{AmMMon}}{58}{1956}{233--238}{#1}}%%
   \ITEE{#3}{EMichael1956a}{%%
      \BIB{#2}{E. Michael}%%
         {Continuous selections. I}%%
         {\jRN{AnnM}}{63}{1956}{361--382}{#1}}%%
   \ITEE{#3}{EMichael1956b}{%%
      \BIB{#2}{E. Michael}%%
         {Continuous selections. II}%%
         {\jRN{AnnM}}{64}{1956}{562--580}{#1}}%%
   \ITEE{#3}{EMichael1959}{%%
      \BIB{#2}{E. Michael}%%
         {A theorem on semi-continuous set-valued functions}%%
         {\jRN{DukeMJ}}{26}{1959}{647--652}{#1}}%%
   \ITEE{#3}{JVanMill1986}{%%
      \BIB{#2}{J. van Mill}%%
         {Another counterexample in ANR theory}%%
         {\jRN{PAMS}}{97}{1986}{136--138}{#1}}%%
   \ITEE{#3}{JVanMill2001}{%%
      \BIb{#2}{J. van Mill}%%
         {The Infinite-Dimensional Topology of Function Spaces %%
         \textup{(North-Holland Mathematical Library, vol. 64)}}%%
         {Elsevier, Amsterdam}{2001}{#1}}%%
   \ITEE{#3}{WMlak1991}{%%
      \BIb{#2}{W. Mlak}%%
         {Hilbert Spaces and Operator Theory}%%
         {PWN --- Polish Scientific Publishers and Kluwer Academic Publishers, Warszawa-Dordrecht}{1991}{#1}}%%
   \ITEE{#3}{JMogilski1979}{%%
      \BIB{#2}{J. Mogilski}%%
         {$CE$-decomposition of $l_2$-manifolds}%%
         {\jRN{BAPolSSSM}}{27}{1979}{309--314}{#1}}%%
   \ITEE{#3}{RLMoore1916}{%%
      \BIB{#2}{R.L. Moore}%%
         {On the foundations of plane analysis situs}%%
         {\jRN{TAMS}}{17}{1916}{131--164}{#1}}%%
   \ITEE{#3}{KMorita1955}{%%
      \BIB{#2}{K. Morita}%%
         {A condition for the metrizability of topological spaces and for $n$-dimensionality}%%
         {\jRN{SciRepTokyoA}}{5}{1955}{33--36}{#1}}%%
   \ITEE{#3}{AMukherjea,NATserpes1976}{%%
      \BIb{#2}{A. Mukherjea and N.A. Tserpes}%%
         {Measures on topological semigroups}%%
         {Springer Lecture Notes in Math. Vol. 547, Berlin}{1976}{#1}}%%
   \ITEE{#3}{JMycielski1974}{%%
      \BIB{#2}{J. Mycielski}%%
         {Remarks on invariant measures in metric spaces}%%
         {\jRN{CollM}}{32}{1974}{105--112}{#1}}%%
   \ITEE{#3}{SNNaboko1984}{%%
      \BIB{#2}{S.N. Naboko}%%
         {Conditions for similarity to unitary and selfadjoint operators}%%
         {\jRN{FunkAnalPril}}{18}{1984}{16--27}{#1}}%%
   \ITEE{#3}{LNachbin1965}{%%
      \BIb{#2}{L. Nachbin}%%
         {The Haar Integral}%%
         {D. Van Nostrand Company, Inc., Princeton-New Jersey-Toronto-New York-London}{1965}{#1}}%%
   \ITEE{#3}{TDNarang,SKGarg1991}{%%
      \BIB{#2}{T.D. Narang and S.K. Garg}%%
         {On the uniqueness of best approximation in non-archimedian spaces}%%
         {\jRN{PeriodMHung}}{22}{1991}{121--124}{#1}}%%
   \ITEE{#3}{JVonNeumann1930}{%%
      \BIB{#2}{J. von Neumann}%%
         {Zur Algebra der Funktionaloperationen und Theorie der normalen Operatoren}%%
         {\jRN{MAnn}}{102}{1930}{370--427}{#1}}%%
   \ITEE{#3}{JVonNeumann1934}{%%
      \BIB{#2}{J. von Neumann}%%
         {Zum Haarschen Mass in topologischen Gruppen}%%
         {\jRN{ComposM}}{1}{1934}{106--114}{#1}}%%
   \ITEE{#3}{JVonNeumann1937}{%%
      \BiB{#2}{J. von Neumann}%%
         {Some matrix-inequalities and metrization of matrix-space}{\jRN{TomskUnivRev}{} \textbf{1} (1937), 286--300; %%
         in }{Collected Works}{Pergamon, New York}{1962}{Vol. 4, 205--219}{#1}}%%
   \ITEE{#3}{JVonNeumann1949}{%%
      \BIB{#2}{J. von Neumann}%%
         {On Rings of Operators. Reduction Theory}%%
         {\jRN{AnnM}}{50}{1949}{401--485}{#1}}%%
   \ITEE{#3}{ONielson1973}{%%
      \BIB{#2}{O. Nielson}%%
         {Borel sets of von Neumann algebras}%%
         {\jRN{AmJM}}{95}{1973}{145--164}{#1}}%%
   \ITEE{#3}{pn2002}{\bibITEM{#2}{#1} \mypaplist{pn1}}%%
   \ITEE{#3}{pn2006a}{\bibITEM{#2}{#1} \mypaplist{pn2}}%%
   \ITEE{#3}{pn2006b}{\bibITEM{#2}{#1} \mypaplist{pn3}}%%
   \ITEE{#3}{pn2007}{\bibITEM{#2}{#1} \mypaplist{pn4}}%%
   \ITEE{#3}{pn2008a}{\bibITEM{#2}{#1} \mypaplist{pn5}}%%
   \ITEE{#3}{pn2008b}{\bibITEM{#2}{#1} \mypaplist{pn6}}%%
   \ITEE{#3}{pn2009a}{\bibITEM{#2}{#1} \mypaplist{pn7}}%%
   \ITEE{#3}{pn2009b}{\bibITEM{#2}{#1} \mypaplist{pn8}}%%
   \ITEE{#3}{pn2009c}{\bibITEM{#2}{#1} \mypaplist{pn9}}%%
   \ITEE{#3}{pn2010a}{\bibITEM{#2}{#1} \mypaplist{pn12}}%%
   \ITEE{#3}{pn2010b}{\bibITEM{#2}{#1} \mypaplist{pn13}}%%
   \ITEE{#3}{pn2011a}{\bibITEM{#2}{#1} \mypaplist{pn10}}%% G-represent
   \ITEE{#3}{pn2011b}{\bibITEM{#2}{#1} \mypaplist{pn15}}%% U-topol
   \ITEE{#3}{pn2011c}{\bibITEM{#2}{#1} \mypaplist{pn16}}%% note-inv
   \ITEE{#3}{pn2011d}{\bibITEM{#2}{#1} \mypaplist{pn17}}%% S-W
   \ITEE{#3}{pn2009x}{%% U-F2
      \bibITEM{#2}{#1} \mypaplist{pn11}}%% U-f
   \ITEE{#3}{pn2010x}{%%
      \bibITEM{#2}{#1} \mypaplist{pn14}}%% NTPVG
   \ITEE{#3}{pnXXXXb}{%% U-F2, U-F, SMF
      \bibITEM{#2}{#1} \mypaplist{pnX2}}%% functor
   \ITEE{#3}{pnXXXXc}{%% SPECTRUM
      \bibITEM{#2}{#1} \mypaplist{pnX3}}%% orbits
   \ITEE{#3}{pnXXXXd}{%% uvAg
      \bibITEM{#2}{#1} \mypaplist{pnX13}}%% note_ANR
   \ITEE{#3}{MNiezgoda1998}{%%
      \BIB{#2}{M. Niezgoda}%%
         {Group majorization and Schur type inequalities}%%
         {\jRN{LAA}}{268}{1998}{9--30}{#1}}%%
   \ITEE{#3}{MNiezgoda1998a}{%%
      \BIB{#2}{M. Niezgoda}%%
         {An analytical characterization of effective and of irreducible groups inducing cone orderings}%%
         {\jRN{LAA}}{269}{1998}{105--114}{#1}}%%
   \ITEE{#3}{MNiezgoda,TYTam2001}{%%
      \BIB{#2}{M. Niezgoda and T.Y. Tam}%%
         {On norm property of $G(c)$-radii and Eaton triples}%%
         {\jRN{LAA}}{336}{2001}{119--130}{#1}}%%
   \ITEE{#3}{APazy1983}{%%
      \BIb{#2}{A. Pazy}{Semigroups of Linear Operators %%
         and Applications to Partial Differential Equations \textup{(Applied Mathematical Sciences, vol. 44)}}%%
         {Springer-Verlag, New York}{1983}{#1}}%%
   \ITEE{#3}{APelc1982}{%%
      \BIB{#2}{A. Pelc}%%
         {Semiregular invariant measures on abelian groups}%%
         {\jRN{PAMS}}{86}{1982}{423--426}{#1}}%%
   \ITEE{#3}{RPenrose1955}{%%
      \BIB{#2}{R. Penrose}%%
         {A generalized inverse for matrices}%%
         {\jRN{ProcCambPhS}}{51}{1955}{406--413}{#1}}%%
   \ITEE{#3}{VPestov2006}{%%
      \BIb{#2}{V. Pestov}%%
         {Dynamics of infinite-dimensional groups. The Ramsey-Dvoretzky-Milman phenomenon}%%
         {University Lecture Series \textbf{40}, AMS, Providence, RI}{2006}{#1}}%%
   \ITEE{#3}{VPestov2007}{%%
      \BiB{#2}{V. Pestov}%%
         {Forty-plus annotated questions about large topological groups}%%
         {in:}{Open Problems in Topology II}{Elliot Pearl (editor), Elsevier B.V., Amsterdam}{2007}{439--450}{#1}}%%
   \ITEE{#3}{PVPetersen1993}{%%
      \BiB{#2}{P.V. Petersen}%%
         {Gromov-Hausdorff convergence of metric spaces}{in book:}{Differential Geometry: Riemannian Geometry %%
         (Los Angeles, CA, 1990)}{Amer. Math. Soc., Providence, RI}{1993}{489--504}{#1}}%%
   \ITEE{#3}{DRamachandran,MMisiurewicz1982}{%%
      \BIB{#2}{D. Ramachandran and M. Misiurewicz}%%
         {Hopf's theorem on invariant measures for a group of transformations}%%
         {\jRN{SM}}{74}{1982}{183--189}{#1}}%%
   \ITEE{#3}{JMRosenblatt1974}{%%
      \BIB{#2}{J.M. Rosenblatt}%%
         {Equivalent invariant measures}%%
         {\jRN{IsraelJM}}{17}{1974}{261--270}{#1}}%%
   \ITEE{#3}{HLRoyden1963}{%%
      \BIb{#2}{H.L. Royden}%%
         {Real Analysis}%%
         {The Macmillan Co., New York}{1963}{#1}}%%
   \ITEE{#3}{WRudin1962}{%%
      \BIb{#2}{W. Rudin}%%
         {Fourier Analysis on Groups \textup{(Interscience Tracts in Pure and Applied Mathematics, Number 12)}}%%
         {Interscience Publishers, New York}{1962}{#1}}%%
   \ITEE{#3}{WRudin1991}{%%
      \BIb{#2}{W. Rudin}%%
         {Functional Analysis}%%
         {McGraw-Hill Science}{1991}{#1}}%%
   \ITEE{#3}{TSaito1972}{%%
      \BiB{#2}{T. Sait\^{o}}{Generations of von Neumann algebras}%%
         {Lecture Notes in Math. vol. 247}{\textup{(}Lecture on Operator Algebras\textup{)}}%%
         {Springer, Berlin-Heidelberg-New York}{1972}{435--531}{#1}}%%
   \ITEE{#3}{KSakai,MYaguchi2003}{%%
      \BIB{#2}{K. Sakai and M. Yaguchi}%%
         {Characterizing manifolds modeled on certain dense subspaces of non-separable Hilbert spaces}%%
         {\jRN{TsukubaJM}}{27}{2003}{143--159}{#1}}%%
   \ITEE{#3}{SSakai1971}{%%
      \BIb{#2}{S. Sakai}%%
         {$\CCc^*$-Algebras and $\WWw^*$-Algebras}%%
         {Springer-Verlag, Berlin-Heidelberg-New York}{1971}{#1}}%%
   \ITEE{#3}{RSchori1971}{%%
      \BIB{#2}{R. Schori}%%
         {Topological stability for infinite-dimensional manifolds}%%
         {\jRN{ComposM}}{23}{1971}{87--100}{#1}}%%
   \ITEE{#3}{JTSchwartz1967}{%%
      \BIb{#2}{J.T. Schwartz}%%
         {$\WWw^*$-algebras}%%
         {Gordon and Breach, Science Publishers Inc., New York-London-Paris}{1967}{#1}}%%
   \ITEE{#3}{ZSemadeni1971}{%%
      \BIb{#2}{Z. Semadeni}%%
         {Banach Spaces of Continuous Functions (Vol. I)}%%
         {\jRN{PWN}}{1971}{#1}}%%
   \ITEE{#3}{JPSerre1951}{%%
      \BIB{#2}{J.-P. Serre}%%
         {Homologie singuli\`{e}re des espaces fibr\'{e}s}%%
         {\jRN{AnnM}}{54}{1951}{425--505}{#1}}%%
   \ITEE{#3}{DSherman2007}{%%
      \BIB{#2}{D. Sherman}%%
         {On the dimension theory of von Neumann algebras}%%
         {\jRN{MScand}}{101}{2007}{123--147}{#1}}%%
   \ITEE{#3}{WSierpinski1928}{%%
      \BIB{#2}{W. Sierpi\'{n}ski}%%
         {Sur les projections des ensembles compl\'{e}mentaires aux ensembles \textup{(A)}}%%
         {\jRN{FM}}{11}{1928}{117--122}{#1}}%%
   \ITEE{#3}{MSlocinski1980}{%%
      \BIB{#2}{M. S\l{}oci\'{n}ski}%%
         {On the Wold-type decomposition of a pair of commuting isometries}%%
         {\jRN{APM}}{37}{1980}{255--262}{#1}}%%
   \ITEE{#3}{RCSteinlage1975}{%%
      \BIB{#2}{R.C. Steinlage}%%
         {On Haar measure in locally compact $T_2$ spaces}%%
         {\jRN{AmJM}}{97}{1975}{291--307}{#1}}%%
   \ITEE{#3}{JStochel,FHSzafraniec1989}{%%
      \BIB{#2}{J. Stochel and F.H. Szafraniec}%%
         {On normal extensions of unbounded operators. III. Spectral properties}%%
         {\jRN{PublRIMSKyoto}}{25}{1989}{105--139}{#1}}%%
   \ITEE{#3}{JStochel,FHSzafraniec1989a}{%%
      \BIB{#2}{J. Stochel and F.H. Szafraniec}%%
         {The normal part of an unbounded operator}%%
         {\jRN{ProcKonink}}{92}{1989}{495--503}{#1}}%%
   \ITEE{#3}{AHStone1962}{%%
      \BIB{#2}{A.H. Stone}%%
         {Absolute $\FFf_{\sigma}$-spaces}%%
         {\jRN{PAMS}}{13}{1962}{495--499}{#1}}%%
   \ITEE{#3}{AHStone1962a}{%%
      \BIB{#2}{A.H. Stone}%%
         {Non-separable Borel sets}%%
         {\jRN{DissM}}{28}{1962}{41 pages}{#1}}%%
   \ITEE{#3}{AHStone1972}{%%
      \BIB{#2}{A.H. Stone}%%
         {Non-separable Borel sets II}%%
         {\jRN{GTopA}}{2}{1972}{249--270}{#1}}%%
   \ITEE{#3}{MHStone1937}{%%
      \BIB{#2}{M.H. Stone}%%
         {Application of the theory of Boolean rings to general topology}%%
         {\jRN{TAMS}}{41}{1937}{375--481}{#1}}%%
   \ITEE{#3}{MHStone1948}{%%
      \BIB{#2}{M.H. Stone}%%
         {The generalized Weierstrass approximation theorem}%%
         {\jRN{MMag}}{21}{1948}{167--184}{#1}}%%
   \ITEE{#3}{BSz-Nagy1947}{%%
      \BIB{#2}{B. Sz.-Nagy}%%
         {On uniformly bounded linear transformations in Hilbert space}%%
         {\jRN{ActaSM}}{11}{1947}{152--157}{#1}}%%
   \ITEE{#3}{WTakahashi1970}{%%
      \BIB{#2}{W. Takahashi}%%
         {A convexity in metric space and nonexpansive mappings, I}%%
         {\jRN{KodaiMSemRep}}{22}{1970}{142--149}{#1}}%%
   \ITEE{#3}{MTakesaki2002}{%%
      \BIb{#2}{M. Takesaki}%%
         {Theory of Operator Algebras I \textup{(Encyclopaedia of Mathematical Sciences, Volume 124)}}%%
         {Springer-Verlag, Berlin-Heidelberg-New York}{2002}{#1}}%%
   \ITEE{#3}{MTakesaki2003}{%%
      \BIb{#2}{M. Takesaki}%%
         {Theory of Operator Algebras II \textup{(Encyclopaedia of Mathematical Sciences, Volume 125)}}%%
         {Springer-Verlag, Berlin-Heidelberg-New York}{2003}{#1}}%%
   \ITEE{#3}{MTakesaki2003a}{%%
      \BIb{#2}{M. Takesaki}%%
         {Theory of Operator Algebras III \textup{(Encyclopaedia of Mathematical Sciences, Volume 127)}}%%
         {Springer-Verlag, Berlin-Heidelberg-New York}{2003}{#1}}%%
   \ITEE{#3}{TYTam1999}{%%
      \BIB{#2}{T.Y. Tam}%%
         {An extension of a result of Lewis}%%
         {\jRN{ELA}}{5}{1999}{1--10}{#1}}%%
   \ITEE{#3}{TYTam2000}{%%
      \BIB{#2}{T.Y. Tam}%%
         {Group majorization, Eaton triples and numerical range}%%
         {\jRN{LMLA}}{47}{2000}{11--28}{#1}}%%
   \ITEE{#3}{TYTam2002}{%%
      \BIB{#2}{T.Y. Tam}%%
         {Generalized Schur-concave functions and Eaton triples}%%
         {\jRN{LMLA}}{50}{2002}{113--120}{#1}}%%
   \ITEE{#3}{TYTam,WCHill2001}{%%
      \BIB{#2}{T.Y. Tam and W.C. Hill}%%
         {On $G$-invariant norms}%%
         {\jRN{LAA}}{331}{2001}{101--112}{#1}}%%
   \ITEE{#3}{AFTiman,IAVestfrid1983}{%%
      \BIB{#2}{A.F. Timan and I.A. Vestfrid}%%
         {Any separable ultrametric space can be isometrically imbedded in $l_2$}%%
         {\jRN{FAA}}{17}{1983}{70--71}{#1}}%%
   \ITEE{#3}{JTomiyama1958}{%%
      \BIB{#2}{J. Tomiyama}%%
         {Generalized dimension function for $\WWw^*$-algebras of infinite type}%%
         {\jRN{TohokuMJ} (2)}{10}{1958}{121--129}{#1}}%%
   \ITEE{#3}{HTorunczyk1970}{%%
      \BIB{#2}{H. Toru\'{n}czyk}%%
         {Remarks on Anderson's paper ``On topological infinite deficiency''}%%
         {\jRN{FM}}{66}{1970}{393--401}{#1}}%%
   \ITEE{#3}{HTorunczyk1970a}{%%
      \BIb{#2}{H. Toru\'{n}czyk}%%
         {$G$-$K$-absorbing and skeletonized sets in metric spaces}%%
         {Ph.D. thesis, Inst. Math. Polish Acad. Sci., Warszawa}{1970}{#1}}%%
   \ITEE{#3}{HTorunczyk1972}{%%
      \BIB{#2}{H. Toru\'{n}czyk}%%
         {A short proof of Hausdorff's theorem on extending metrics}%%
         {\jRN{FM}}{77}{1972}{191--193}{#1}}%%
   \ITEE{#3}{HTorunczyk1974}{%%
      \BIB{#2}{H. Toru\'{n}czyk}%%
         {Absolute retracts as factors of normed linear spaces}%%
         {\jRN{FM}}{86}{1974}{53--67}{#1}}%%
   \ITEE{#3}{HTorunczyk1975}{%%
      \BIB{#2}{H. Toru\'{n}czyk}%%
         {On Cartesian factors and the topological classification of linear metric spaces}%%
         {\jRN{FM}}{88}{1975}{71--86}{#1}}%%
   \ITEE{#3}{HTorunczyk1978}{%%
      \BIB{#2}{H. Toru\'{n}czyk}%%
         {Concerning locally homotopy negligible sets and characterization of $l_2$-manifolds}%%
         {\jRN{FM}}{101}{1978}{93--110}{#1}}%%
   \ITEE{#3}{HTorunczyk1980}{%%
      \BiB{#2}{H. Toru\'{n}czyk}{Characterization of infinite-dimensional manifolds}{in:}%%
         {Proceedings of the International Conference on Geometric Topology (Warsaw, 1978)}%%
         {\jRN{PWN}}{1980}{431--437}{#1}}%%
   \ITEE{#3}{HTorunczyk1981}{%%
      \BIB{#2}{H. Toru\'{n}czyk}%%
         {Characterizing Hilbert space topology}%%
         {\jRN{FM}}{111}{1981}{247--262}{#1}}%%
   \ITEE{#3}{HTorunczyk1985}{%%
      \BIB{#2}{H. Toru\'{n}czyk}%%
         {A correction of two papers concerning Hilbert manifolds}%%
         {\jRN{FM}}{125}{1985}{89--93}{#1}}%%
   \ITEE{#3}{KTsuda1985}{%%
      \BIB{#2}{K. Tsuda}%%
         {A note on closed embeddings of finite dimensional metric spaces}%%
         {\jRN{BLondMS}}{17}{1985}{273--278}{#1}}%%
   \ITEE{#3}{PSUrysohn1925}{%%
      \BIB{#2}{P.S. Urysohn}%%
         {Sur un espace m\'{e}trique universel}%%
         {\jRN{CRASParis}}{180}{1925}{803--806}{#1}}%%
   \ITEE{#3}{PSUrysohn1927}{%%
      \BIB{#2}{P.S. Urysohn}%%
         {Sur un espace m\'{e}trique universel}%%
         {\jRN{BullSM}}{51}{1927}{43--64, 74--96}{#1}}%%
   \ITEE{#3}{VVUspenskij1986}{%%
      \BIB{#2}{V.V. Uspenskij}%%
         {A universal topological group with a countable basis}%%
         {\jRN{FAA}}{20}{1986}{86--87}{#1}}%%
   \ITEE{#3}{VVUspenskij1990}{%%
      \BIB{#2}{V.V. Uspenskij}%%
         {On the group of isometries of the Urysohn universal metric space}%%
         {\jRN{CMUC}}{31}{1990}{181--182}{#1}}%%
   \ITEE{#3}{VVUspenskij2004}{%%
      \BIB{#2}{V.V. Uspenskij}%%
         {The Urysohn universal metric space is homeomorphic to a Hilbert space}%%
         {\jRN{TopA}}{139}{2004}{145--149}{#1}}%%
   \ITEE{#3}{VVUspenskij2008}{%%
      \BIB{#2}{V.V. Uspenskij}%%
         {On subgroups of minimal topological groups}%%
         {\jRN{TopA}}{155}{2008}{1580--1606}{#1}}%%
   \ITEE{#3}{VSVaradarajan1963}{%%
      \BIB{#2}{V.S. Varadarajan}%%
         {Groups of automorphisms of Borel spaces}%%
         {\jRN{TAMS}}{109}{1963}{191--220}{#1}}%%
   \ITEE{#3}{AMVershik1998}{%%
      \BIB{#2}{A.M. Vershik}%%
         {The universal Urysohn space, Gromov's metric triples, and random metrics on the series of natural numbers}%%
         {\jRN{UspekhiMN}}{53}{1998}{57--64}{#1} English translation: \jRN{RussMS}{} \textbf{53} (1998), 921--928. %%
         Correction: \jRN{UspekhiMN}{} \textbf{56} (2001), p. 207. English translation: \jRN{RussMS}{} \textbf{56} %%
         (2001), p. 1015.}%%
   \ITEE{#3}{AMVershik2002}{%%
      \BIb{#2}{A.M. Vershik}%%
         {Random metric spaces and the universal Urysohn space}%%
         {Fundamental Mathematics Today. 10th anniversary of the Independent Moscow University. MCCME Publ.}{2002}{#1}}%%
   \ITEE{#3}{NWeaver1999}{%%
      \BIb{#2}{N. Weaver}%%
         {Lipschitz Algebras}%%
         {World Scientific}{1999}{#1}}%%
   \ITEE{#3}{JWeidmann1980}{%%
      \BIb{#2}{J. Weidmann}%%
         {Linear Operators in Hilbert Spaces}%%
         {(Graduate Texts in Mathematics, vol. 68) Springer-Verlag New York Inc.}{1980}{#1}}%%
   \ITEE{#3}{JEWest1969}{%%
      \BIB{#2}{J.E. West}%%
         {Approximating homotopies by isotopies in Fr\'{e}chet manifolds}%%
         {\jRN{BAMS}}{75}{1969}{1254--1257}{#1}}%%
   \ITEE{#3}{JEWest1969a}{%%
      \BIB{#2}{J.E. West}%%
         {Fixed-point sets of transformation groups on infinite-product spaces}%%
         {\jRN{PAMS}}{21}{1969}{575--582}{#1}}%%
   \ITEE{#3}{JEWest1970}{%%
      \BIB{#2}{J.E. West}%%
         {The ambient homeomorphy of infinite-dimensional Hilbert spaces}%%
         {\jRN{PacJM}}{34}{1970}{257--267}{#1}}%%
   \ITEE{#3}{JHCWhitehead1949}{%%
      \BIB{#2}{J.H.C. Whitehead}%%
         {Combinatorial homotopy I}%%
         {\jRN{BAMS}}{55}{1949}{213--245}{#1}}%%
   \ITEE{#3}{GTWhyburn1942}{%%
      \BIb{#2}{G. T. Whyburn}%%
         {Analytic Topology}%%
         {Amer. Math. Soc. Colloquium Publications (vol. XXVIII), New York}{1942}{#1}}%%
   \ITEE{#3}{WWogen1969}{%%
      \BIB{#2}{W. Wogen}%%
         {On generators for von Neumann algebras}%%
         {\jRN{BAMS}}{75}{1969}{95--99}{#1}}%%
   \ITEE{#3}{RYTWong1967}{%%
      \BIB{#2}{R.Y.T. Wong}%%
         {On homeomorphisms of certain infinite dimensional spaces}%%
         {\jRN{TAMS}}{128}{1967}{148--154}{#1}}%%
   \ITEE{#3}{LYang,JZhang1987}{%%
      \BIB{#2}{L. Yang and J. Zhang}%%
         {Average distance constants of some compact convex space}%%
         {\jRN{JChinUST}}{17}{1987}{17--23}{#1}}%%
   \ITEE{#3}{PZakrzewski1993}{%%
      \BIB{#2}{P. Zakrzewski}%%
         {The existence of invariant $\sigma$-finite measures for a group of transformations}%%
         {\jRN{IsraelJM}}{83}{1993}{275--287}{#1}}%%
   \ITEE{#3}{PZakrzewski2002}{%%
      \BIb{#2}{P. Zakrzewski}%%
         {Measures on Algebraic-Topological Structures, Handbook of Measure Thoery}%%
         {E. Pap, ed., Elsevier, Amsterdam}{2002, 1091--1130}{#1}}%%
   \ITEE{#3}{KZhu2000}{%%
      \BIB{#2}{K. Zhu}%%
         {Operators in Cowen-Douglas classes}%%
         {\jRN{IllinoisJM}}{44}{2000}{767--783}{#1}}%%
   }

\newcommand{\mypaplist}[2][]{%%
   \ITEE{#2}{pn1}{%%
      \myBIB{Separate and joint similarity to families of normal operators}%%
         {\jRN[#1]{SM}}{149}{2002}{39--62}}%%
   \ITEE{#2}{pn2}{%%
      \myBIB{Locally arcwise connected metrizable spaces with the fixed point property are complete-metrizable}%%
         {\jRN[#1]{TopA}}{153}{2006}{1639--1642}}%%
   \ITEE{#2}{pn3}{%%
      \myBIB{Invariant measures for equicontinuous semigroups of continuous transformations of a compact Hausdorff space}%%
         {\jRN[#1]{TopA}}{153}{2006}{3373--3382}}%%
   \ITEE{#2}{pn4}{%%
      \myBIB{Approximation of the Hausdorff distance by the distance of continuous surjections}%%
         {\jRN[#1]{TopA}}{154}{2007}{655--664}}%%
   \ITEE{#2}{pn5}{%%
      \myBIB{Generalized Haar integral}%%
         {\jRN[#1]{TopA}}{155}{2008}{1323--1328}}%%
   \ITEE{#2}{pn6}{%%
      \myBIB{Integration and Lipschitz functions}%%
         {\jRN[#1]{RCMP}}{57}{2008}{391--399}}%%
   \ITEE{#2}{pn7}{%%
      \myBIB{Canonical Banach function spaces generated by Urysohn universal spaces. Measures as Lipschitz maps}%%
         {\jRN[#1]{SM}}{192}{2009}{97--110}}%%
   \ITEE{#2}{pn8}{%%
      \myBIB{Urysohn universal spaces as metric groups of exponent $2$}%%
         {\jRN[#1]{FM}}{204}{2009}{1--6}}%%
   \ITEE{#2}{pn9}{%%
      \myBIB{Central subsets of Urysohn universal spaces}%%
         {\jRN[#1]{CMUC}}{50}{2009}{445--461}}%%
   \ITEE{#2}{pn10}{%%
      \myBIB[P. Niemiec and T.Y. Tam]{A representation of $G$-in\-variant norms for Eaton triple}%%
         {\jRN[#1]{JCA}}{18}{2011}{59--65}}%%
   \ITEE{#2}{pn11}{%% U-F2
      \myBIB{Functor of extension of contractions on Urysohn universal spaces}%%
         {\jRN[#1]{ACS}}{}{2009}{\texttt{DOI: 10.1007/s10485-009-9218-z}}}%%
   \ITEE{#2}{pn12}{%%
      \myBIB{Ultra-$\mM$-separability}%%
         {\jRN[#1]{TopA}}{157}{2010}{669--673}}%%
   \ITEE{#2}{pn13}{%%
      \myBIB{Functor of extension of $\Lambda$-isometric maps between central subsets %%
         of the unbounded Urysohn universal space}{\jRN[#1]{CMUC}}{51}{2010}{541--549}}%%
   \ITEE{#2}{pn14}{%%
      \myBIB{Normed topological pseudovector groups}{\jRN[#1]{ACS}}{}{2010}%%
         {\ITE{\equal{#1}{}}{\texttt{DOI: 10.1007/s10485\-010-9239-7}}{\texttt{DOI: 10.1007/s10485-010-9239-7}}}}%%
   \ITEE{#2}{pn15}{%%
      \myBIB{Topological structure of Urysohn universal spaces}%%
         {\jRN[#1]{TopA}}{158}{2011}{352--359}}%%
   \ITEE{#2}{pn16}{%%
      \myBIB{A note on invariant measures}%%
         {\jRN[#1]{OpusM}}{31}{2011}{425--431}}%%
   \ITEE{#2}{pn17}{%%
      \myBIB{Strengthened Stone-Weierstrass type theorem}%%
         {\jRN[#1]{OpusM}}{31}{2011}{645--650}}%%
   \ITEE{#2}{pnX2}{%% U-F2, U-F, SMF
      \myBAPP{Functor of continuation in Hilbert cube and Hilbert space}%%
         {to appear in \jRN[#1]{FM}}}%%
   \ITEE{#2}{pnX3}{%% SPECTRUM
      \myBAPP{Norm closures of orbits of bounded operators}%%
         {to appear.}}%%
   \ITEE{#2}{pnX6}{%%
      \myBAPP{Extending maps by injective $\sigma$-$Z$-maps in Hilbert manifolds}%%
         {to appear in \jRN[#1]{BullPol}}}%%
   \ITEE{#2}{pnX7}{%%
      \myBAPP{Spaces of measurable functions}%%
         {submitted to \jRN[#1]{CollectM}}}%%
   \ITEE{#2}{pnX8}{%%
      \myBAPP{Normal systems over ANR's, rigid embeddings and nonseparable absorbing sets}%%
         {submitted to \jRN[#1]{ActaMSinES}}}%%
   \ITEE{#2}{pnX9}{%%
      \myBAPP{Borel structure of the spectrum of a closed operator}%%
         {submitted to \jRN[#1]{SM}}}%%
   \ITEE{#2}{pnX10}{%%
      \myBAPP{Central points and measures and dense subsets of compact metric spaces}%%
         {submitted to \jRN[#1]{TopMethNA}}}%%
   \ITEE{#2}{pnX11}{%%
      \myBAPP{Generalized absolute values and polar decompositions of a bounded operator}%%
         {submitted to \jRN[#1]{IEOT}.}}%%
   \ITEE{#2}{pnX12}{%%
      \myBAPP{Ultrametrics, extending of Lipschitz maps and nonexpansive selections}%%
         {accepted for publication in \jRN[#1]{HJM}}}%%
   \ITEE{#2}{pnX13}{%%
      \myBAPP{A note on ANR's}%%
         {submitted to \jRN[#1]{TopA}}}%%
   \ITEE{#2}{pnX14}{%%
      \myBAPP{Problem with almost everywhere equality}%%
         {submitted to \jRN[#1]{ArchM}}}%%
   \ITEE{#2}{pnX15}{%%
      \myBAPP{Universal valued Abelian groups}%%
         {submitted to \jRN[#1]{LNM}}}%%
   \ITEE{#2}{pnX16}{%%
      \myBAPP{Unitary equivalence and decompositions of finite systems of closed densely defined operators %%
         in Hilbert spaces}{submitted to \jRN[#1]{DissM}}}%%
   }

%+

%+

\begin{document}

\title[Functor of continuation]{Functor of continuation\\in Hilbert cube and Hilbert space}
\myData
\begin{abstract}
A $Z$-set in a metric space $X$ is a closed subset $K$ of $X$ such that each map of the Hilbert cube $Q$ into $X$
can uniformly be approximated by maps of $Q$ into $X \setminus K$. The aim of the paper is to show that
there exists a functor of extension of maps between $Z$-sets of $Q$ [or $l_2$] to maps acting on the whole
space $Q$ [resp. $l_2$]. Special properties of the functor (see the points (a)--(i), page \pageref{properties})
are proved.\\
\textit{2000 MSC: 18B30, 54C20, 57N20, 54B10, 46A04.}\\
Key words: $Z$-set, functor of extension, Hilbert cube, Fr\'{e}chet space.
\end{abstract}
\maketitle

%%\thisPaper{pn??}

Anderson \cite{anderson} has proved that every homeomorphism between two $Z$-sets of the Hilbert
cube $Q$ or two $Z$-sets of the countable infinite product $\RRR^{\omega}$ of real lines can be extended
to a homeomorphism of the whole space onto itself. His definition of a $Z$-set has been formulated in terms
of the homotopy language. Later Toru\'{n}czyk \cite{torunczyk} has put a more comfortable, adopted by us
(see Abstract), definition of a $Z$-set in such a way that both the definitions, Anderson's and Toru\'{n}czyk's,
are equivalent in ANR's.\par
The above mentioned Anderson's theorem on extending homeomorphisms between $Z$-sets
of $Q$ or $\RRR^{\omega}$ has been generalized (\cite{and-mc}, \cite{chapman}) and settled in any manifold modelled
on an infinite dimensional Fr\'{e}chet space \cite{chapman} (which is, in fact, homeomorphic to a Hilbert space,
see \cite{tor1,tor2}). For more information on $Z$-sets see e.g. \cite[Chapter~V]{bessaga-pelczynski}.\par
The aim of the paper is to strenghten the homeomorphism extension theorem of Anderson in the following way:
Let $\Omega$ be the space homeomorphic to the Hilbert cube $Q$ or to $l^2$. Let $\ZzZ$ be the family (category)
of all maps between $Z$-sets of $\Omega$ consisting of all pairs $(\varphi,L)$, where $\dom(\varphi)$ and $L$
are $Z$-sets in $\Omega$ and $\varphi$ is an $L$-valued continuous function, and let $\CCc(\Omega,\Omega)$
denote the space of all continuous functions of $\Omega$ to $\Omega$. We shall show that there exists an assignment
$\ZzZ \ni (\varphi,L) \mapsto \widehat{\varphi}_L \in \CCc(\Omega,\Omega)$ such that whenever
$(\varphi,L) \in \ZzZ$, then (below, `$\dom$', `$\im$' and `$\overline{\im}$' denote the domain, the image
and the closure of the image of a map):
\begin{enumerate}[\upshape(a)]
\item $\widehat{\id}_L = \id_{\Omega}$ and $\widehat{\psi \circ \varphi}_M = \widehat{\psi}_M \circ
   \widehat{\varphi}_L$ for every $(\psi,M) \in \ZzZ$ with $\dom(\psi) = L$; in particular: if $\varphi$
   is a homeomorphism (onto $L$), then so is $\widehat{\varphi}_L$, \label{properties}
\item $\widehat{\varphi}_L(x) = \varphi(x)$ for $x \in \dom(\varphi)$,
\item $\varphi$ is an injection [surjection; embedding] \iaoi{} so is $\widehat{\varphi}_L$,
\item $\im(\widehat{\varphi}_L) \cap L = \im(\varphi)$ and $\overline{\im}(\widehat{\varphi}_L) \cap L
   = \overline{\im}(\varphi)$,
\item $\im(\varphi)$ is closed [dense in $L$] iff $\im(\widehat{\varphi}_L)$ is closed [dense in $\Omega$],
\item $\overline{\im}(\widehat{\varphi}_L)$ is homeomorphic to $\Omega$; and $\im(\varphi)$ is completely metrizable
   iff so is $\im(\widehat{\varphi}_L)$, iff $\im(\widehat{\varphi}_L)$ is homeomorphic to $\Omega$,
\item if $\overline{\im}(\varphi) \neq L$, then $\overline{\im}(\widehat{\varphi}_L)$ is a $Z$-set in $\Omega$
   and if $\im(\varphi) \neq L$, then $\im(\widehat{\varphi}_L)$ is of type $Z$ in $\Omega$, i.e. the set
   $\CCc(Q,\Omega \setminus \im(\widehat{\varphi}_L))$ is dense in $\CCc(Q,\Omega)$,
\item if a sequence $(\varphi^{(n)},L) \in \ZzZ$ is such that $K := \dom(\varphi^{(n)})$ is independent of $n$,
   then the maps $\widehat{\varphi^{(n)}}_L$ converge to $\widehat{\varphi^{(0)}}_L$ pointwisely
   [resp. uniformly on compact sets] iff the $\varphi^{(n)}$'s converge so to $\varphi^{(0)}$
\end{enumerate}
and
\begin{enumerate}[\upshape(a)]\setcounter{enumi}{8}
\item for each compatible [complete] bounded metric\footnote{By a \textit{compatible metric} defined on a metrizable space
   $X$ we mean any metric on $X$ which induces the given topology of the space $X$.} $d$ on $L$ there exists a compatible
   [complete] metric $\widehat{d}$ on $\Omega$ such that $\widehat{d}$ extends $d$, $\diam (\Omega,\widehat{d}\,)
   = \max(1,\diam (L,d))$ and the map $$(\CCc(K,L),d_{\sup}) \ni \xi \mapsto \widehat{\xi}_L
   \in (\CCc(\Omega,\Omega),\widehat{d}_{\sup})$$ is isometric.
\end{enumerate}
The conditions (a) and (b) state that the assignment $(\varphi,L) \mapsto \widehat{\varphi}_L$ is a functor
(of the category of maps between $Z$-sets of $\Omega$ into the category of maps of $\Omega$ into itself) of continuous
extension. The presented proofs are surprisingly easy and use simple ideas. However, the main tools
of this paper are previously mentioned Anderson's result and the well-known theorem of Keller \cite{keller} (in case
$\Omega \cong Q$) and the result of Bessaga and Pe\l{}czy\'{n}ski \cite{b-p} on spaces of measurable functions (in case
$\Omega \cong l^2$). What is more, the main proof is noncostructive and --- in comparison to homeomorphism extension
theorems --- the extensor will not be continuous in the limitation topologies (see \cite{tor1} for the definition)
in case $\Omega = l^2$.\par
In the first section we present a general scheme of building functors of extension under certain conditions,
which are fulfilled for categories of maps between $Z$-sets of $Q$ and of $l^2$. In Sections 2 and 3 we use this
scheme to build suitable functors for a space $\Omega$ homeomorphic to the Hilbert cube and to the separable
infinite-dimensional Hilbert space, respectively.

\textbf{Notation.} In this paper $\RRR_+$ denotes $[0,+\infty)$ and $Q$ stands for the Hilbert cube. For topological
spaces $X$ and $Y$, $\CCc(X,Y)$ denotes the set (with no singled out topology) of all continuous functions of $X$ to $Y$.
The collection of all compatible bounded metrics on a metrizable space $X$ is denoted by $\Metr(X)$. For $d \in \Metr(Y)$
the induced sup-metric on $\CCc(X,Y)$ is denoted by $d_{\sup}$. The identity map on a set $X$ is denoted by $\id_X$.\par
A subset $A$ of a metric space $X$ is \textit{of type $Z$} in $X$ iff the set $\CCc(Q,X \setminus A)$ is dense
in the space $\CCc(Q,X)$ equipped with the topology of uniform convergence. A \textit{$Z$-set} in $X$ is a closed
subset of $X$ of type $Z$. The family of all $Z$-sets in $X$ is denoted by $\ZZz(X)$.

\SECT{General scheme}

Suppose that $\Omega$ is a topological space and that for each space $\Omega'$ homeomorphic to $\Omega$ there is
a singled out family $\KKk(\Omega')$ of subsets of $\Omega'$ such that the following conditions are fulfilled:
\begin{enumerate}[({A}X1)]
\item if $\Omega_1 \cong \Omega_2 \cong \Omega$ and $h\dd \Omega_1 \to \Omega_2$ is a homeomorphism,
   then for each $A \subset X$, $A \in \KKk(\Omega_1)$ iff $h(A) \in \KKk(\Omega_2)$,
\item if $K_1, K_2 \in \KKk(\Omega)$ and $K_1 \cong K_2$, then $(\Omega,K_1) \cong (\Omega,K_2)$.
\end{enumerate}
Both the above conditions imply the next:
\begin{enumerate}[({A}X1')]\setcounter{enumi}{1}
\item if $\Omega_j \cong \Omega$, $K_j \in \KKk(\Omega_j)\ (j=1,2)$ and $K_1 \cong K_2$, then $(\Omega_1,K_1)
   \cong (\Omega_2,K_2)$.
\end{enumerate}
Further assume that for any $K,L \in \KKk(\Omega)$ and $f \in \CCc(K,L)$ we have defined a topological space $\Lambda(K)$
and a function $\Lambda(f) \in \CCc(\Lambda(K),\Lambda(L))$, as well as an embedding $\delta_K\dd K \to \Lambda(K)$,
so that:
\begin{enumerate}[($\Lambda$1)]
\item (functor attributes) $\Lambda(\id_K) = \id_{\Lambda(K)}$ and $\Lambda(g \circ f) = \Lambda(g) \circ \Lambda(f)$
   whenever the compositions make sense,
\item for each $K \in \KKk(\Omega)$, $\Lambda(K) \cong \Omega$ and $\im(\delta_K) \in \KKk(\Lambda(K))$,
\item for any $K,L \in \KKk(\Omega)$ and $f \in \CCc(K,L)$, $\Lambda(f) \circ \delta_K = \delta_L \circ f$.
\end{enumerate}
Under all the above assumptions we shall build a special functor of extension maps between members of $\KKk(\Omega)$.\par
Firstly, using ($\Lambda$2) and (AX2'), for every $K \in \KKk(\Omega)$ take a homeomorphism $H_K$
of $(\Lambda(K),\im(\delta_K))$ onto $(\Omega,K)$. Secondly, for $\varphi \in \CCc(K,L)$ (with $K,L \in \KKk(\Omega)$)
define an auxiliary map $\bar{\varphi}_L := \delta_L^{-1} \circ H_L^{-1} \circ \varphi \circ H_K \circ \delta_K
\in \CCc(K,L)$. Finally put $\widehat{\varphi}_L = H_L \circ \Lambda(\bar{\varphi}_L) \circ H_K^{-1}
\in \CCc(\Omega,\Omega)$. By a straightforward calculation one checks that the assignment
$(\varphi,L) \mapsto \widehat{\varphi}_L$ has functor attributes as in ($\Lambda$1) and that $\widehat{\varphi}_L$
extends $\varphi$.\par
Finally, assume also that $\Omega$ is metrizable and that for each $K \in \KKk(\Omega)$ we have defined
an assignment $\Metr(K) \ni d \mapsto \Lambda(d) \in \Metr(\Lambda(K))$ so that
\begin{enumerate}[($\Lambda$1)]\setcounter{enumi}{3}
\item $d = \Lambda(d) \circ (\delta_K \times \delta_K)$ for $K \in \KKk(\Omega)$,
\item for each $K,L \in \KKk(\Omega)$ and $\varrho \in \Metr(L)$, the map $$(\CCc(K,L),\varrho_{\sup}) \ni \psi
   \mapsto \Lambda(\psi) \in (\CCc(\Lambda(K),\Lambda(L)),\Lambda(\varrho)_{\sup})$$ is isometric,
\end{enumerate}
then we may extend also bounded metrics. Indeed, for $L \in \KKk(\Omega)$ and $d \in \Metr(L)$, put
$\bar{d} = d \circ [(H_L \circ \delta_L) \times (H_L \circ \delta_L)] \in \Metr(L)$
and let $\widehat{d} = \Lambda(\bar{d}\,) \circ [H_L^{-1} \times H_L^{-1}]$. One checks that $\widehat{d}
\in \Metr(\Omega)$, that $\widehat{d}$ extends $d$ and the map $(\CCc(K,L),d_{\sup}) \ni \varphi \mapsto
\widehat{\varphi}_L \in (\CCc(\Omega,\Omega),\widehat{d}_{\sup})$ is isometric for each $K \in \KKk(\Omega)$.\par
The simplicity of the formulas for $\bar{\varphi}_L$ and $\widehat{\varphi}_L$ has deep consequences: each property
(such as pointwise convergence, injectivity, density or closedness of ranges, etc.) which is preserved by the functor
$\Lambda$ and every map of the form $f \mapsto h \circ f \circ h^{-1}$ (with $h$ being
a homeomorphism) is preserved also by the final assignment $(\varphi,L) \mapsto \widehat{\varphi}_L$. Now if $\Omega$
denotes $Q$ or $l^2$ and $\KKk(\Omega') = \ZZz(\Omega')$ for $\Omega' \cong \Omega$, then, by Anderson's theorem,
the axioms (AX1) and (AX2) are fulfilled. Therefore it is enough --- for our investigations --- to build the functor
$\Lambda$ with suitable properties. This will be done in two steps. The first step is common for both the cases
and is described below, while the second steps totally differ for $\Omega \cong Q$ and $\Omega \cong l^2$. We shall finish
the constructions in the next two sections.\par
Given a space $\Omega$ (homeomorphic to $Q$ or $l^2$), fix a homeomorphic copy $\Omega_0$ of $\Omega$ with a complete
metric $d_0 \in \Metr(\Omega_0)$ such that $\Omega \cap \Omega_0 = \varempty$ and $\diam (\Omega_0,d_0) = 1$.
Now for any $K, L \in \ZZz(\Omega)$, $d \in \Metr(L)$ and $f \in \CCc(K,L)$ let $\IIi(K) = K \cup \Omega_0$
and $\IIi(d) \in \Metr(\IIi(L))$ and $\IIi(f) \in \CCc(\IIi(K),\IIi(L))$ be as follows: $\IIi(d)$ coincides with $d$
on $K \times K$, with $d_0$ on $\Omega_0 \times \Omega_0$ and $\IIi(d)(x,y) = \max(\diam(K,d),1)$ if one of $x$ and $y$
belongs to $K$ and the other to $\Omega_0$; $\IIi(f)(x) = f(x)$ for $x \in K$ and $\IIi(f)(x) = x$ for $x \in \Omega_0$.
It is easy to check that $\IIi$ is a functor (of the category of maps between $Z$-sets of $\Omega$ into the category
of maps between closed subsets of $\Omega \cup \Omega_0$) which has all properties (a)--(e), (h) and (i).
The second step is to build a functor $\MMm$ such that the conditions (a)--(i) and ($\Lambda$1)--($\Lambda$5)
are satisfied for $\Lambda := \MMm \circ \IIi$.

\SECT{Hilbert cube}

Throughout this section we assume that $\Omega \cong Q$. In that case, the main tool to build the functor $\MMm$
will be the well-known theorem of Keller \cite{keller} (the proof may also be found
in \cite[Chapter III, \S 3]{bessaga-pelczynski}):

\begin{thm}{keller}
Every compact, metrizable, infinite-dimensional convex subset of a locally convex space is homeomorphic to $Q$.
\end{thm}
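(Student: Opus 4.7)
The plan is to prove Keller's theorem in three stages: a reduction to a Hilbert space, the selection of a suitable coordinate sequence, and the explicit construction of a homeomorphism with $Q$.

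First, I would reduce the problem to the case where $K$ is a compact, convex, infinite-dimensional subset of $l^2$. Since $K$ is compact metrizable inside a locally convex space $E$, Hahn--Banach combined with the separability of $K$ yields a countable family $(f_n)\subset E^*$ separating the points of $K$. Rescaling so that $|f_n(x)|\leqsl 2^{-n}$ on $K$ and setting $T(x)=(f_n(x))_{n\geqsl1}$ produces a continuous affine injection $T\colon K\to l^2$, which is a homeomorphism onto its image by compactness. Hence one may assume $K\subset l^2$. After translating so that $0\in K$ and replacing $l^2$ by $H:=\overline{\SPAN(K)}$, the infinite-dimensionality hypothesis means that $H$ is a separable infinite-dimensional Hilbert space.

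Next, I would construct a sequence $(a_n)\subset K$ with $\|a_n\|\to 0$ whose Gram--Schmidt orthogonalisations form an orthonormal basis of $H$. The inductive step exploits the fact that $K$ affinely spans the infinite-dimensional space $H$ and contains the origin: at stage $n$, one picks $a_n\in K$ of arbitrarily small prescribed norm and with a nonzero component orthogonal to $\SPAN\{a_1,\dots,a_{n-1}\}$. The existence of such an $a_n$ follows by projecting $K$ onto the orthogonal complement of $\SPAN\{a_1,\dots,a_{n-1}\}$ and using that this projection is still infinite-dimensional.

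The main step, and the main obstacle, is then to promote the sequence $(a_n)$ into an explicit homeomorphism $\Phi\colon Q\to K$. A first-guess map of the form
\[
\Phi(t) \;=\; \sum_{n\geqsl1}\lambda_n t_n a_n,\qquad t=(t_n)\in[0,1]^{\omega},
\]
with weights $\lambda_n>0$ satisfying $\sum\lambda_n=1$, is continuous (using $\|a_n\|\to 0$), takes values in $K$ by convexity, and is injective by the linear independence of the $a_n$; however, \emph{surjectivity} onto $K$ is not automatic and is where all the difficulty is concentrated. The standard remedy, which I would adopt, is to build $\Phi$ inductively rather than by a single closed-form expression: choose an increasing sequence of finite-dimensional compact convex bodies $K_1\subset K_2\subset\dots\subset K$ whose union is dense in $K$, construct compatible finite-dimensional homeomorphisms $K_n\cong[0,1]^n$ whose restrictions match under inclusion, and pass to the limit, using equicontinuity forced by the compactness of $K$ and $Q$ to obtain a well-defined continuous bijection. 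Since a continuous bijection between compact Hausdorff spaces is automatically a homeomorphism, this finishes the proof.
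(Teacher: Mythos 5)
First, note that the paper does not prove this statement at all: it is quoted as Keller's classical theorem, with the proof delegated to \cite{keller} and to \cite[Chapter~III, \S~3]{bessaga-pelczynski}; so your argument has to stand on its own, and it does not. Your first two stages are essentially fine: the affine embedding of $K$ into $l^2$ via a separating sequence of functionals is the standard reduction, and picking $a_n\in K$ of small norm with a new orthogonal component is legitimate because $0\in K$ and $K$ is convex (though to make the Gram--Schmidt vectors an orthonormal \emph{basis} of $H$ you must also interleave the choices with a dense sequence so that the closed linear span of the $a_n$ is all of $H$; mere linear independence does not give this). The problem is Stage~3, which you correctly identify as the locus of all the difficulty and then do not actually carry out.

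Concretely: (i) equicontinuity of a sequence of maps is \emph{not} ``forced by the compactness of $K$ and $Q$'' --- a sequence of homeomorphisms between fixed compact metric spaces can easily fail to be equicontinuous (consider $t\mapsto t^n$ on $[0,1]$), so the passage to the limit is unjustified; (ii) even granting a continuous limit map, a uniform limit of bijections need not be injective or surjective, so you have not produced the ``continuous bijection between compact Hausdorff spaces'' that your last sentence invokes; (iii) the existence of a chain $K_1\subset K_2\subset\cdots$ with $K_n\cong[0,1]^n$ and homeomorphisms that restrict compatibly is itself unestablished --- $K_n$ sits inside $K_{n+1}$ as a slice of positive codimension, and there is no canonical way to force $h_{n+1}$ restricted to $K_n$ to agree with $h_n$ followed by a face inclusion. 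The classical proofs circumvent exactly these obstructions by a more delicate device: after embedding $K$ affinely into a ``Keller cube'' in $l^2$, one realizes the homeomorphism as an infinite left product $\cdots\circ h_2\circ h_1$ of homeomorphisms of the ambient space, each displacing points by a summably small amount (so that the product and the product of the inverses both converge uniformly), each factor straightening one coordinate direction of $K$; this is the content of \cite[Chapter~III, \S~3]{bessaga-pelczynski}. Without an argument of that kind, your proposal reduces Keller's theorem to an unproved assertion of essentially the same depth.
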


Recall that the space $\IIi(K)$ for each closed $K \subset \Omega$ is infinite and compact. Now for a nonempty compact
metrizable space $C$, let $\MMm(C)$ be the set of all probabilistic Borel measures on $C$ equipped with the standard weak
topology (inherited, thanks to the Riesz characterization theorem, from the weak-* topology of the dual Banach space
of $\CCc(C,\RRR)$), i.e. the topology with the basis consisting of finite intersections of sets of the form:
$$
B(\mu;f,\epsi) = \Bigl\{\lambda \in \MMm(C)\dd\ \Bigl|\int_C f\dint{\mu} - \int_C f\dint{\lambda}\Bigr| < \epsi\Bigr\},
$$
where $\mu \in \MMm(C)$, $f \in \CCc(C,\RRR)$ and $\epsi > 0$. The space $\MMm(C)$ is compact, convex and metrizable.
What is more, $\MMm(C)$ is infinite-dimensional iff $C$ is infinite and hence --- by \THM{keller} ---
$\MMm(C) \cong \Omega$ for infinite space $C$, which yields the first claim of ($\Lambda$2).\par
For $a \in C$, let $\delta_a \in \MMm(C)$ denote the Dirac's measure at $a$, i.e. the probabilistic measure
such that $\delta_a(\{a\}) = 1$. It is clear that the map
\begin{equation}\label{eqn:delta}
\delta_C\dd C \ni a \mapsto \delta_a \in \MMm(C)
\end{equation}
is an embedding. What is more, its image is a $Z$-set in $\MMm(C)$ provided $C$ is infinite, which gives the remainder
of ($\Lambda$2).\par
For a metric $d \in \Metr(C)$ let $\MMm(d)\dd \MMm(C) \times \MMm(C) \to \RRR_+$ be defined by
$$
\MMm(d)(\mu,\nu) = \sup\Bigl\{\Bigl|\int_C f \dint{\mu} - \int_C f \dint{\nu}\Bigr|\dd\ f \in \Contr(C,\RRR)\Bigr\},
$$
where $\Contr(C,\RRR)$ stands for the family of all $d$-nonexpansive maps of $C$ into $\RRR$.
Then $\MMm(d) \in \Metr(\MMm(C))$ and $\diam (\MMm(C),\MMm(d)) = \diam (C,d)$ ($\MMm(d)$ is sometimes called
the \textit{Kantorovich} metric induced by $d$). Observe that $\MMm(d)(\delta_a,\delta_b) = d(a,b)$ for $a,b \in C$,
which corresponds to ($\Lambda$4).\par
Now let $C$ and $D$ be two nonempty compact metrizable spaces. For a map $\varphi\dd C \to D$, let
$\MMm(\varphi)\dd \MMm(C) \to \MMm(D)$ be given by the formula $(\MMm(\varphi)(\mu))(B) = \mu(\varphi^{-1}(B))\
(\mu \in \MMm(C),\ B \in \BBb(D))$. Thus $\MMm(\varphi)(\mu)$ is the transport of the measure $\mu$ by the map $\varphi$.
Observe that if $\lambda = \MMm(\varphi)(\mu)$, then for each $g \in \CCc(D,\RRR)$, $\int_D g \dint{\lambda}
= \int_C g \circ \varphi \dint{\mu}$. This implies that $\MMm(\varphi) \in \CCc(\MMm(C),\MMm(D))$.
Moreover, $\MMm(\varphi)$ is affine (so its image is a compact convex set) and $\MMm(\varphi) \circ \delta_C
= \delta_D \circ \varphi$, which corresponds to ($\Lambda$3) (and finally leads to (b)). It is easy to check that $\MMm$
is a functor (cf. ($\Lambda$1) and (a)) acting in the category of maps between compact metrizable spaces which preserves
the pointwise convergence (cf. (h)) and injectivity (cf. (c)). What is more, $\im(\MMm(\varphi))
= \{\mu \in \MMm(D)\dd\ \mu(\im(\varphi)) = 1\}$ (which can be deduced e.g. by the Kre\u{\i}n-Milman theorem),
from which one obtains (d), (e), (f) and (g).\par
Having this, one checks that the functor $\Lambda = \MMm \circ \IIi$ satisfies all needed properties.
For example, we shall check that the map
$$(\CCc(K,L),d_{\sup}) \ni \varphi \mapsto \MMm(\varphi) \in (\CCc(\MMm(K),\MMm(L)),\MMm(d)_{\sup})$$
is isometric for every $d \in \Metr(L)$ (which leads to ($\Lambda$5) and (i)). For $\varphi, \psi \in \CCc(K,L)$
and $\mu \in \MMm(K)$, put $\mu_{\varphi} = \MMm(\varphi)(\mu)$ and $\mu_{\psi} = \MMm(\psi)(\mu)$. Note that
\begin{multline*}
\MMm(d)(\MMm(\varphi)(\mu),\MMm(\psi)(\mu)) = \sup_{f \in \Contr(L,\RRR)}
\Bigl|\int_L f \dint{\mu_{\varphi}} - \int_L f \dint{\mu_{\psi}}\Bigr|\\
= \sup_{f \in \Contr(L,\RRR)} \Bigl|\int_K f \circ \varphi \dint{\mu}
- \int_K f \circ \psi \dint{\mu}\Bigr| \leqsl \sup_{f \in \Contr(L,\RRR)} \int_K |f \circ \varphi
- f \circ \psi| \dint{\mu}\\
\leqsl \int_K d(\varphi(x),\psi(x)) \dint{\mu(x)} \leqsl d_{\sup}(\varphi,\psi).
\end{multline*}
This gives $\MMm(d)_{\sup}(\MMm(\varphi),\MMm(\psi)) \leqsl d_{\sup}(\varphi,\psi)$.
The inverse inequality is immediate. The details in verification of all other conditions are left as an exercise
for the reader.\par
The existence of the suitable assignment $(\varphi,L) \mapsto \widehat{\varphi}_L$ has the following consequence:

\begin{cor}{decompos}
Let $K$ be a $Z$-set in $Q$. Let $\Auth(Q,K) = \{h \in \Homeo(Q)\dd$ $h(K) = K\}$ and $$\Auth_0(Q,K)
= \{h \in \Auth(Q,K)\dd\ h(x) = x\textup{ for }x \in K\}$$ be spaces equipped with the topology of uniform convergence.
Then there is a closed subgroup $\GGg$ of $\Auth(Q,K)$ such that the map $$\Phi\dd \Auth_0(Q,K) \times \GGg \ni (u,v)
\mapsto u \circ v \in \Auth(Q,K)$$ is a homeomorphism.
\end{cor}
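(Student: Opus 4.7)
The plan is to use the extension functor $(\varphi,L)\mapsto\widehat{\varphi}_L$ constructed in Section~2 to produce a natural section of the restriction map $\Auth(Q,K)\to\Homeo(K)$, $h\mapsto h|_K$, and take $\GGg$ to be its image. Concretely, set
$$
\GGg := \{\widehat{\varphi}_K \dd\ \varphi \in \Homeo(K)\}.
$$
By property (a) each $\widehat{\varphi}_K$ is a self-homeomorphism of $Q$, and by (b) together with $\varphi(K)=K$ it maps $K$ onto $K$, so $\GGg \subset \Auth(Q,K)$. The identities $\widehat{\id_K}_K = \id_Q$ and $\widehat{\psi\circ\varphi}_K = \widehat{\psi}_K \circ \widehat{\varphi}_K$ from (a) show that $\GGg$ is a subgroup (in particular $(\widehat{\varphi}_K)^{-1} = \widehat{\varphi^{-1}}_K$). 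To see that $\GGg$ is closed, observe that if $\widehat{\varphi_n}_K \to h$ uniformly on $Q$, then $\varphi_n = \widehat{\varphi_n}_K|_K \to h|_K$ uniformly on $K$; by (h) (or directly by the isometric property (i)) we get $\widehat{\varphi_n}_K \to \widehat{h|_K}_K$, hence $h = \widehat{h|_K}_K \in \GGg$.

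Next, I define the candidate inverse $\Psi\dd \Auth(Q,K) \to \Auth_0(Q,K)\times\GGg$ by
$$
\Psi(h) := \bigl(h \circ (\widehat{h|_K}_K)^{-1},\ \widehat{h|_K}_K\bigr).
$$
By (b), $\widehat{h|_K}_K$ agrees with $h$ on $K$; since $h(K)=K$ this yields $(\widehat{h|_K}_K)^{-1}|_K = (h|_K)^{-1}$, so the first coordinate of $\Psi(h)$ restricts to $\id_K$ on $K$ and hence lies in $\Auth_0(Q,K)$. The equality $\Phi\circ\Psi = \id$ is immediate. For $\Psi\circ\Phi = \id$, given $u \in \Auth_0(Q,K)$ and $v = \widehat{\varphi}_K \in \GGg$, restriction to $K$ gives $(u\circ v)|_K = \varphi$, so $\widehat{(u\circ v)|_K}_K = v$, and therefore the first coordinate of $\Psi(u\circ v)$ is $(u\circ v)\circ v^{-1} = u$.

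It remains to check that $\Phi$ and $\Psi$ are continuous. $\Phi$ is continuous because composition is continuous in $\Homeo(Q)$ with the uniform topology. For $\Psi$, the restriction map $h \mapsto h|_K$ is clearly continuous in the uniform topology, property (i) exhibits $\varphi \mapsto \widehat{\varphi}_K$ as an isometry between appropriate sup-metrics (hence continuous), and composition and inversion are continuous in $\Homeo(Q)$ because $Q$ is compact; combining these yields continuity of $\Psi$. I expect no real obstacle here: the functor has been engineered to satisfy exactly the properties (functoriality in (a), extension in (b), closedness under uniform limits in (h), and metric continuity in (i)) that the decomposition requires. The one subtle point worth flagging is that closedness of $\GGg$ and continuity of $\Psi$ genuinely rely on (h)/(i) — the purely algebraic/functorial conditions (a)--(b) alone would give a group-theoretic section but not a topological one.
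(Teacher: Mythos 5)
Your proof is correct and follows essentially the same route as the paper: the same choice of $\GGg$ as the image of the section $\varphi\mapsto\widehat{\varphi}_K$ and the same explicit inverse $h\mapsto\bigl(h\circ(\widehat{h|_K}_K)^{-1},\widehat{h|_K}_K\bigr)$. The only (harmless) difference is that you establish closedness of $\GGg$ by a direct sequential argument via (h)/(i), whereas the paper invokes the complete metrizability of $\Homeo(K)$ together with the fact that $\varphi\mapsto\widehat{\varphi}_K$ is an embedding and a group homomorphism.
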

\begin{proof}
It is enough to put $\GGg = \{\widehat{h}_K\dd\ h \in \Homeo(K)\}$. Since the map $\Psi\dd \Homeo(K) \ni h \mapsto
\widehat{h}_K \in \Homeo(Q)$ is an embedding (and a group homomorphism) and $\Homeo(K)$ is completely metrizable,
therefore $\GGg$ is closed. Now it remains to notice that $$\Phi^{-1}(h)
= (h \circ \Psi(h\bigr|_K)^{-1},\Psi(h\bigr|_K)).$$
\end{proof}

\SECT{Hilbert space}

In this section we assume that $\Omega \cong l^2$. Instead of Keller's theorem, which was used in the previous
part, here we need the theorem of Bessaga and Pe\l{}czy\'{n}ski \cite{b-p}. In order to state it, we have to describe
\textit{spaces of measurable functions}.\par
Let $X$ be a separable nonempty metrizable space and let $\MMm(X)$ be the space of all equivalence classes
of Lebesgue measurable functions of $[0,1]$ into $X$ with respect to the relation of being equal Lebesgue almost
evrywhere on $[0,1]$.\par
For $d \in \Metr(X)$ the function $\MMm(d)\dd \MMm(X) \times \MMm(X) \ni (f,g) \mapsto \int_0^1 d(f(t),g(t)) \dint{t}
\in \RRR_+$ is a bounded metric (in fact, $\diam (X,d) = \diam (\MMm(X),\MMm(d))$) on $\MMm(X)$ which is complete iff $d$
is. The topology it induces on $\MMm(X)$ is independent of $d$, and a sequence $(f_n)$ converges to $f$ in $\MMm(X)$ iff
it converges in measure in the sense of Halmos \cite[\S22]{halmos}, or equivalently, if every subsequence of $(f_n)$
has a subsequence converging to $f$ pointwisely a.e. For us the most important property of $\MMm(X)$, from which
the first claim of ($\Lambda$2) is deduced, is the previously mentioned theorem of Bessaga and Pe\l{}czy\'{n}ski
(see also \cite[Theorem VI.7.1]{bessaga-pelczynski}):

\begin{thm}{b-p}
If $X$ is a metrizable space, then the space $\MMm(X)$ is homeomorphic to $l^2$ iff $X$ is separable, completely
metrizable and has more than one point.
\end{thm}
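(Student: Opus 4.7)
The plan is to prove both directions, with the nontrivial content in sufficiency. For the ``only if'' direction, fix any $d \in \Metr(X)$: the map $X \ni x \mapsto \hat{x} \in \MMm(X)$ sending $x$ to its constant-function class is an isometric embedding with respect to $d$ and $\MMm(d)$, since $\MMm(d)(\hat{x},\hat{y}) = \int_0^1 d(x,y) \dint{t} = d(x,y)$. Its image is closed in $\MMm(X)$ (a convergent sequence of constants has, on a subsequence, a.e.\ pointwise limit that must itself be a.e.\ constant). Thus $X$ embeds as a closed subspace of $l^2$, so $X$ is separable and completely metrizable; and since $\MMm(\{*\})$ is a singleton, the hypothesis $\card(X) \geqsl 2$ is obviously needed.

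For sufficiency, I would apply Toru\'nczyk's topological characterization of $l^2$: a nonempty separable completely metrizable space $Y$ is homeomorphic to $l^2$ \iaoi{} $Y$ is an absolute retract satisfying the \emph{discrete approximation property} (DAP) --- namely, for every open cover $\UUu$ of $Y$ and every two maps $\phi_1,\phi_2\dd Q \to Y$ there exist $\UUu$-close maps $\phi_1',\phi_2'\dd Q \to Y$ with $\phi_1'(Q) \cap \phi_2'(Q) = \varempty$. Separability and completeness of $\MMm(X)$ are routine: a countable dense family consists of step functions whose values are taken from a countable dense subset of $X$, and $\MMm(d)$ is complete whenever $d$ is. That $\MMm(X)$ is an absolute retract follows from contractibility combined with local convexity-style arguments: fix $f_0 \in \MMm(X)$ and set $H(f,t)(s) = f(s)$ for $s > t$, $H(f,t)(s) = f_0(s)$ for $s \leqsl t$; then $\MMm(d)(H(f,t),H(g,t)) \leqsl \MMm(d)(f,g)$ and $\MMm(d)(H(f,t),f) \leqsl t \cdot \diam(X,d)$, so $H$ is a continuous contraction to $f_0$. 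The ANR property can then be promoted using a Dugundji-type argument on ``cut-and-paste'' mixtures of finitely many functions over a measurable partition of $[0,1]$.

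The main obstacle is verifying DAP, and this is where $\card(X) \geqsl 2$ becomes essential. Given $\phi_1,\phi_2\dd Q \to \MMm(X)$ and a tolerance $\epsi > 0$, pick two distinct points $p_1,p_2 \in X$ and a Borel set $E \subset [0,1]$ of Lebesgue measure less than $\epsi/\diam(X,d)$, and define $\phi_i'(q)$ to be the class of the function equal to $p_i$ on $E$ and equal to a representative of $\phi_i(q)$ off $E$. Each $\phi_i'$ is continuous, $\epsi$-close to $\phi_i$ in $\MMm(d)_{\sup}$, and $\phi_1'(Q) \cap \phi_2'(Q) = \varempty$ because any two such functions disagree on the whole set $E$ of positive measure. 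The delicacy is to upgrade this uniform construction to a genuinely $\UUu$-controlled one: one covers $Q$ by finitely many open sets on which $\phi_i$ varies less than the mesh of $\UUu$, chooses disjoint Borel sets $E_1,\ldots,E_N \subset [0,1]$ (using that $[0,1]$ admits countable atomless partitions), and patches local modifications together with a partition of unity. This step --- establishing DAP uniformly over an arbitrary open cover --- is the technical heart of the proof; the remaining verification of the AR property and of the first two hypotheses of Toru\'nczyk's criterion is comparatively routine.
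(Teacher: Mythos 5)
First, a point of comparison: the paper does not prove this theorem at all --- it is imported verbatim from Bessaga and Pe\l{}czy\'{n}ski \cite{b-p} (see also \cite[Theorem VI.7.1]{bessaga-pelczynski}), so there is no in-paper argument to match your proposal against. Your ``only if'' direction is fine: the constant-function embedding is isometric with closed image, and a closed subspace of a Polish space is Polish.

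The sufficiency half, however, rests on a misstated criterion, and this is a genuine gap rather than a fixable slip of wording. The property you invoke --- for every open cover $\UUu$ and every \emph{two} maps $\phi_1,\phi_2\dd Q\to Y$ there are $\UUu$-close maps with \emph{disjoint} images --- is the disjoint-cells property, which (together with being a compact AR) characterizes the Hilbert cube, not $l^2$. The Hilbert cube $Q$ itself is a separable, completely metrizable AR satisfying your stated condition, yet $Q\not\cong l^2$ since $Q$ is compact; so your criterion, if it were a characterization of $l^2$, would prove a false statement. Toru\'{n}czyk's actual characterization of $l^2$ (in \cite{tor1}, which moreover postdates the 1972 theorem being proved here) requires the \emph{discrete} approximation property: for every open cover $\UUu$ and every \emph{sequence} of maps $f_1,f_2,\dots\dd Q\to Y$ there exist $\UUu$-close maps $g_n$ such that the family $\{g_n(Q)\}_{n\geqsl 1}$ is discrete in $Y$ (every point has a neighbourhood meeting at most one $g_n(Q)$). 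Discreteness of an infinite family is exactly what compactness obstructs, and it is not delivered by your cut-and-paste construction, which only separates two images pairwise. One could try to salvage the idea --- e.g.\ code the index $n$ into the modification on $E$ by Rademacher-type subsets $E_n\subset E$ with $|E_n\bigtriangleup E_m|=|E|/2$, so that the perturbed images are uniformly $\MMm(d)$-separated and hence discrete --- but then the bookkeeping that lets $|E|$ shrink to achieve $\UUu$-closeness while keeping a separation constant uniform in $n$ is precisely the part that is missing, and it is not the same ``technical heart'' you describe. Separately, your AR step only establishes contractibility; that $\MMm(X)$ is an ANR for an arbitrary Polish $X$ (not just for $X$ an absolute retract) needs the measurable-mixing Dugundji argument to be actually carried out, since a $G_\delta$ subset of an AR need not be an ANR. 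As it stands the proposal does not prove the theorem.
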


Fix for a moment a separable metrizable space $X$. For $x \in X$ let $\delta_x \in \MMm(X)$ be the constant
function with the only value $x$. Put $\delta_X\dd X \ni x \mapsto \delta_x \in \MMm(X)$. Clearly, $\delta_X$
is a closed embedding. What is more, if $\card X \geqsl 2$, the image of $\delta_X$ is a $Z$-set in $X$, which yields
the remainder of ($\Lambda$2). As in Section 2, observe that $\MMm(d)(\delta_x,\delta_y) = d(x,y)$ for each $x,y \in X$
and $d \in \Metr(X)$ (which corresponds to ($\Lambda$4)).\par
If $A$ is a subset of $X$, $\MMm(A)$ naturally embeds in $\MMm(X)$ and therefore we shall consider $\MMm(A)$ as a subset
of $\MMm(X)$. Under such an agreement one has $\overline{\MMm(A)} = \MMm(\bar{A})$. Note also that if $A \neq X$,
then $\MMm(A)$ is of type $Z$ in $\MMm(X)$, which will give (g) (see \THM{image} below). Indeed, if $a \in X \setminus A$,
then the sequence $\Phi_n\dd \MMm(X) \ni f \mapsto \delta_a\bigr|_{[0,\frac1n)} \cup f\bigr|_{[\frac1n,1]} \in \MMm(X)$
converges uniformly on compact subsets of $\MMm(X)$ to $\id_{\MMm(X)}$ and the images of the maps $\Phi_n\ (n \geqsl 1)$
are disjoint from $\MMm(A)$, which implies that $\MMm(A)$ is of type $Z$.\par
Now let $Y$ be another separable metrizable space and $f \in \CCc(X,Y)$. We define $\MMm(f)\dd \MMm(X) \to \MMm(Y)$
by the formula $(\MMm(f))(u) = f \circ u$. It is clear that $\MMm(f) \in \CCc(\MMm(X),\MMm(Y))$, that
$\MMm(f) \circ \delta_X = \delta_Y \circ f$ (which corresponds to ($\Lambda$3) and finally leads to (b)) and that $\MMm$
is a functor (cf. ($\Lambda$1) and (a)) acting in the category of maps between separable metrizable spaces. Our next step
is to prove that $\im(\MMm(f)) = \MMm(\im(f))$ (which asserts that conditions (d), (e), (f) and (g) are fulfilled).
It is however not as simple as it looks. To show this, we shall apply two theorems of the descriptive set theory and we
have to introduce the terminology.\par
A \textit{Souslin space} is the empty topological space or a metrizable space which is a continuous image of the space
$\RRR \setminus \QQQ$. We shall need the following three properties of Souslin spaces (for the proofs
and more information see e.g. \cite[Chapter XIII]{k-m}):
\begin{enumerate}[(S1)]
\item the continuous image of a Borel subset of a separable completely metrizable space is a Souslin space
   (\cite{k-m}: Theorem XIII.1.6 combined with the property (1) on page 434),
\item the inverse image of a Souslin space under a Borel function (between Borel subsets of separable completely
   metrizable spaces) is Souslin as well (\cite[Theorem XIII.4.5]{k-m}),
\item every Souslin subset $A$ of the interval $[0,1]$ is Lebesgue measurable (\cite[Theorem XIII.4.1]{k-m}).
\end{enumerate}

The main tool (\cite[Theorem XIV.1.1]{k-m}) used in the next result will be

\begin{thm}{selector}
Let $Y \neq \varempty$ be a separable completely metrizable space; let $X \neq \varempty$ be any set and
let $\RRr$ be a $\sigma$-algebra of subsets of $X$. If a function $F\dd X \to 2^Y$ satisfies the following two conditions
\begin{enumerate}[\upshape(i)]
\item $F(x)$ is a nonempty and closed subset of $Y$ for any $x \in X$,
\item $\{x \in X\dd\ F(x) \cap U \neq \varempty\} \in \RRr$ for any open subset $U$ of the space $Y$,
\end{enumerate}
then there exists a function $f\dd X \to Y$ such that $f(x) \in F(x)$ for every $x \in X$ and $f$ is $\RRr$-measurable,
that is, $f^{-1}(U) \in \RRr$ for all open sets $U \subset Y$.
\end{thm}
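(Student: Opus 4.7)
The plan is to apply the classical Kuratowski--Ryll-Nardzewski scheme: build a uniformly Cauchy sequence of $\RRr$-measurable ``approximate selectors'' $f_n\dd X\to Y$ and let $f$ be their pointwise limit. First, I would fix a complete bounded metric $d$ on $Y$ with $d\leqsl 1$ (possible since $Y$ is Polish) and a countable dense sequence $(y_k)_{k\geqsl 1}\subset Y$. The construction will produce $\RRr$-measurable maps $f_n\dd X\to Y$ ($n\geqsl 0$), each taking values in $\{y_1,y_2,\ldots\}$, such that for every $x\in X$
\begin{enumerate}[\upshape(a)]
\item $\dist(f_n(x),F(x))<2^{-n}$, and
\item $d(f_n(x),f_{n-1}(x))<2^{-n+2}$ for $n\geqsl 1$.
\end{enumerate}
The base case $f_0\equiv y_1$ fulfils (a) trivially because $\diam (Y,d)\leqsl 1$.

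For the inductive step, given $f_{n-1}$, for each $k\geqsl 1$ I would set
$$
U_k^n \;=\; \{x\in X\dd F(x)\cap B(y_k,2^{-n})\neq\varempty\}\;\cap\;\{x\in X\dd d(f_{n-1}(x),y_k)<2^{-n+2}\}.
$$
The first factor lies in $\RRr$ by hypothesis (ii), and the second does because $f_{n-1}$ is $\RRr$-measurable and $\{y\in Y\dd d(y,y_k)<2^{-n+2}\}$ is open. Setting $V_k^n:=U_k^n\setminus\bigcup_{j<k}U_j^n$ yields a disjoint $\RRr$-measurable refinement, and I would define $f_n(x)=y_k$ for $x\in V_k^n$. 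Then $f_n$ is $\RRr$-measurable because $f_n^{-1}(U)$ is the countable union of those $V_k^n$ with $y_k\in U$, while (a)--(b) hold by the very definition of $U_k^n$.

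The crucial point is that $\bigcup_k U_k^n=X$. For any $x\in X$, hypothesis (a) at step $n-1$ yields $z\in F(x)$ with $d(z,f_{n-1}(x))<2^{-n+1}$; density then supplies $k$ with $d(y_k,z)<2^{-n}$, so $z\in F(x)\cap B(y_k,2^{-n})$ and $d(f_{n-1}(x),y_k)<2^{-n+1}+2^{-n}<2^{-n+2}$, placing $x$ in $U_k^n$. Once $(f_n)$ is built, (b) shows it is uniformly Cauchy in $x$, so completeness of $d$ produces a well-defined limit $f(x):=\lim_n f_n(x)$; $f$ inherits $\RRr$-measurability from the uniform convergence of $\RRr$-measurable functions into a metric space, and (a) combined with closedness of each $F(x)$ forces $f(x)\in F(x)$.

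The main obstacle is reconciling the two competing requirements at each step---proximity to $F(x)$ and proximity to the previous approximation---within a single $\RRr$-measurable partition of $X$. Intersecting the two naturally measurable conditions inside $U_k^n$ and then countably exhausting via the $V_k^n$'s is precisely what resolves this tension and makes the induction run.
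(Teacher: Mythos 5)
The paper does not actually prove this statement: it is quoted verbatim, with a citation, as Theorem XIV.1.1 of Kuratowski--Mostowski \cite{k-m} (the Kuratowski--Ryll-Nardzewski selection theorem), so there is no internal proof to compare against. Your argument is exactly the standard successive-approximation proof of that theorem and is correct in all essentials, with one cosmetic slip at the base case: with only $\diam(Y,d)\leqsl 1$ you get $\dist(f_0(x),F(x))\leqsl 1$ rather than $<1$, and the step $n=1$ genuinely needs the strict inequality to produce $z\in F(x)$ with $d(z,f_0(x))<2^{0}$; replacing $d$ by $d/(1+d)$ (still complete and compatible, now everywhere $<1$) fixes this. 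Everything else --- the measurability of the sets $U_k^n$, the covering argument, the disjointification, the uniform Cauchy estimate, and the passage to the limit using closedness of $F(x)$ and measurability of pointwise limits into a metric target --- is sound.
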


Now we are ready to prove the following

\begin{thm}{image}
If $X$ and $Y$ are two separable metrizable spaces and $f \in \CCc(X,Y)$, then $\im(\MMm(f)) = \MMm(\im(f))$,
provided $X$ is completely metrizable.
\end{thm}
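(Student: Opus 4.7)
The inclusion $\im(\MMm(f)) \subset \MMm(\im(f))$ is immediate: if $u\dd [0,1] \to X$ is measurable then $f \circ u$ takes values in $\im(f)$, so its equivalence class belongs to $\MMm(\im(f))$. Thus the whole content is the reverse inclusion, which amounts to a measurable selection problem: given a measurable $v\dd [0,1] \to Y$ with $v(t) \in \im(f)$ for a.e.\ $t$, produce a measurable $u\dd [0,1] \to X$ with $f \circ u = v$ a.e. The natural candidate is any selector of the multifunction $F(t) := f^{-1}(\{v(t)\})$, so the plan is to verify the hypotheses of \THM{selector} for $F$ (with the $\sigma$-algebra $\RRr$ of Lebesgue measurable subsets of $[0,1]$) and then observe that the selector it produces does the job.

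First I would replace $v$ by a Borel representative of its equivalence class (every Lebesgue measurable map into a separable metrizable space agrees almost everywhere with a Borel one) and, after modifying on a Lebesgue null set, arrange that $v(t) \in \im(f)$ for \emph{every} $t$. With this preparation $F(t)$ is a nonempty closed subset of $X$ for every $t \in [0,1]$, which is hypothesis (i) of \THM{selector}; here the fact that $X$ is completely metrizable (hence a Polish space) plays the role demanded by the theorem.

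The key step is verifying hypothesis (ii), namely that for every open $U \subset X$ the set
\[
\{t \in [0,1]\dd\ F(t) \cap U \neq \varempty\} = \{t \in [0,1]\dd\ v(t) \in f(U)\} = v^{-1}(f(U))
\]
is Lebesgue measurable. This is where the three properties of Souslin spaces come in. Since $U$ is an open, hence Borel, subset of the separable completely metrizable space $X$, property (S1) tells us that $f(U)$ is a Souslin subset of $Y$. Since $v$ is Borel, property (S2) then gives that $v^{-1}(f(U))$ is a Souslin subset of $[0,1]$, and finally (S3) yields that it is Lebesgue measurable. I expect this is the main technical obstacle: the set $f(U)$ need not be Borel, so naive manipulations with Borel sets do not suffice and the Souslin/analytic-set machinery is essential; it is also exactly the point at which the complete metrizability of $X$ is used.

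Having checked (i) and (ii), \THM{selector} supplies a Lebesgue measurable map $u\dd [0,1] \to X$ with $u(t) \in F(t)$ for every $t$, i.e.\ $f(u(t)) = v(t)$ everywhere. Then $u$ represents an element of $\MMm(X)$ and $\MMm(f)(u) = f \circ u = v$ in $\MMm(Y)$, which establishes the required inclusion $\MMm(\im(f)) \subset \im(\MMm(f))$ and completes the proof.
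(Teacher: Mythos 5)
Your proposal follows essentially the same route as the paper's proof: the trivial forward inclusion, then the reverse inclusion by applying the measurable selection theorem to $F(t)=f^{-1}(\{v(t)\})$, with hypothesis (ii) verified via the Souslin-set properties (S1)--(S3). Your extra remark about first passing to a Borel representative of $v$ with $v([0,1])\subset\im(f)$ everywhere makes explicit a step the paper leaves implicit, but the argument is the same and correct.
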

\begin{proof}
The inclusion ``$\subset$'' easily follows from the relation $$\im(\MMm(f)(u)) \subset \im(f).$$
To prove the inverse one, take a Borel function $v\dd [0,1] \to Y$ such that $v([0,1]) \subset \im(f)$.
Let $\LLl$ denote the $\sigma$-algebra of all Lebesgue measurable subsets of $[0,1]$. Define $F\dd [0,1] \to 2^X$
by the formula $F(t) = f^{-1}(\{v(t)\})$. Clearly, $F(t)$ is nonempty and closed in $X$ for any $t \in [0,1]$.
What is more, if $U$ is an open subset of $X$, then, by (S1), $f(U)$ is a Souslin space and hence, by (S2), so is the set
$v^{-1}(f(U))$ and therefore it is Lebesgue measurable (by (S3)). But $v^{-1}(f(U)) = \{t \in [0,1]\dd F(t) \cap U \neq
\varempty\}$, so $\{t \in [0,1]\dd F(t) \cap U \neq \varempty\} \in \LLl$. Now \THM{selector} gives us a Lebesgue
measurable function $u\dd [0,1] \to X$ such that $u(t) \in F(t)$ for any $t \in [0,1]$. This means that $u \in \MMm(X)$
and $(\MMm(f))(u) = v$.
\end{proof}

Now in the same way as in Section 2 we define the functor $\Lambda$.
To convince the reader that all conditions (a)--(i) are fulfilled, we shall show that $\MMm$ preserves the uniform
convergence on compact subsets (which is a part of (h); other details and verification of (c) and (i) are left
as an exercise). Assume that $(f_n)_n \subset \CCc(X,Y)$ tends uniformly on compact subsets of $X$ to $f \in \CCc(X,Y)$.
Let $(u_n)_n$ be a sequence of elements of $M(X)$ which is convergent to $u \in M(X)$. We have to prove
that $(M(f_n)(u_n))_n$ converges to $M(f)(u)$. For an arbitrary subsequence of $(u_n)_n$ take a subsequence $(u_{\nu_n})_n$
of it such that the set $T = \{t \in [0,1]\dd \lim_{n\to\infty} u_{\nu_n}(t) = u(t)\}$ has the Lebesgue measure equal
to $1$. Observe that $\lim_{n\to\infty} f_{\nu_n}(u_{\nu_n}(t)) = f(u(t))$ for $t \in T$. This means
that $(M(f_{\nu_n})(u_{\nu_n}))_n$ is pointwisely convergent to $M(f)(u)$ a.e. and finally $M(f_n)(u_n)$ tends to $M(f)(u)$
in the topology of $M(Y)$.\par
We end the paper with a note that we do not know if there exists an analogous functor of extension of mappings between
$Z$-sets of $l^2$ which is continuous in the limitation topologies.

\end{document}